\theoremstyle{plain}
\newtheorem{thm}{Theorem}[]
\newtheorem{cor}[thm]{Corollary}
\newtheorem{lem}[thm]{Lemma}
\newtheorem{prop}[thm]{Proposition}
\theoremstyle{definition}
\newtheorem{ex}{Example}
\newtheorem{defn}[thm]{Definition}
\newtheorem*{rmk}{Remark}
\newtheorem*{mainprf}{Proof of Theorem \ref{mainthm}}
\newcommand{\Qb}{\mathbb{Q}}
\newcommand{\Pb}{\mathbb{P}}
\newcommand{\Rb}{\mathbb{R}}
\newcommand{\Pt}{\tilde{\mathbb{P}}}
\newcommand{\Qt}{\tilde{\mathbb{Q}}}
\newcommand{\Fg}{\mathcal{F}}
\newcommand{\Gg}{\mathcal{G}}
\newcommand{\Ft}{\tilde{\mathcal{F}}}
\newcommand{\hs}{\hspace{2mm}}
\newcommand{\hsl}{\hspace{1mm}}
\newcommand{\ind}{\mathbbm{1}}
\author{Simon C. Harris and Matthew I. Roberts}
\title{Branching Brownian Motion:\\ Almost Sure Growth Along Unscaled Paths\\}
\begin{document}

\maketitle

\subsection*{Abstract}
We give new results on the growth of the number of particles in a dyadic branching Brownian motion which follow within a fixed distance of a path $f:[0,\infty)\to\Rb$. We show that it is possible to count the number of particles without rescaling the paths. Our results reveal that the number of particles along certain paths can oscillate dramatically. The methods used are entirely probabilistic, taking advantage of the spine technique developed by, amongst others, Lyons et al \cite{lyons_et_al:conceptual_llogl_mean_behaviour_bps}, Kyprianou \cite{kyprianou:alternative_simon_prob_analysis_fkpp}, and Hardy \& Harris \cite{hardy_harris:new_spine_formulation}.

\section{Introduction}

\noindent
The large-deviation properties of branching Brownian motion (BBM) have been well studied: for example, see Lee \cite{lee:large_deviations_for_branching_diffusions} and Hardy \& Harris \cite{hardy_harris:spine_bbm_large_deviations} for results on ``difficult'' paths which have a small probability of any particle following them, and Git \cite{git:almost_sure_path_properties} for the almost-sure growth rate of the number of particles along ``easy'' paths along which we see exponential growth in the number of particles. To give these results, the paths of a BBM are rescaled onto the interval $[0,1]$, echoing the approach of Schilder's theorem for a single Brownian motion.

Here we consider a problem similar in theme to the more classical path large deviations results of Git \cite{git:almost_sure_path_properties}, but from a naive standpoint, in which we are given a fixed function $f:[0,\infty)\to\Rb$ and we want to know how many particles in a BBM follow uniformly close to this path -- that is, within a fixed distance $L$ of $f(t)$ for all times $t\geq0$. Clearly there is a positive probability that no particle will achieve this (indeed, the very first particle could wander away from $f$ before it has the chance to give birth to another): in this event we say that the process becomes extinct.

The intuition is that the growth of the population due to branching is in constant competition with the ``deaths'' due to particles failing to follow the function $f$. Thus a natural condition arises: if the gradient of $f$ is too large, then the process eventually dies out almost surely; otherwise we may condition on non-extinction and give an almost sure result on the number of particles along the path.

We take advantage of the now well-known spine technique to interpret the change of measure given by a carefully chosen martingale. The change involves forcing one particle (the spine) to stay within a tube of radius $L$ about our function $f$ for all time. We then use the spine decomposition (see \cite{hardy_harris:new_spine_formulation}) which allows us to bound the growth of the system by looking at the births along the spine. We use only this intuitive tool, along with integration by parts, to complete the majority of the study -- and emphasise that the results follow so smoothly only because the appropriate choice of martingale allows the established spine methods to do the work for us.

We mentioned earlier that our results are given conditional on non-extinction. In fact, our proofs initially give results on the event that our particular martingale has a strictly positive limit. In Section \ref{inv_spine} we turn to showing that these events coincide to within a set of zero probability. The difficulties we face in this section are inherent in the time-inhomogeneity of the problem, and standard methods (analytic or probabilistic) cannot be applied. This fact is underlined by the observation that we are essentially considering a one-dimensional branching diffusion with \emph{time-dependent drift}, and asking how many particles remain within a bounded domain.

Finally, we note that our methods could easily be extended to a wide range of other branching diffusions. For simplicity, we consider only dyadic branching Brownian motion, but other diffusions and other branching distributions (subject to standard supercriticality and ``$A\log A$'' conditions) could be considered -- the spine techniques involved extend exactly as in the papers of Lyons et al \cite{kurtz_et_al:conceptual_kesten_stigum, lyons:simple_path_to_biggins, lyons_et_al:conceptual_llogl_mean_behaviour_bps}.

\section{Main result}

\subsection{Initial definitions}
We consider a branching Brownian motion starting with one particle at the origin, whereby each particle moves independently and undergoes independent dyadic branching at exponential rate $r>0$. We let the set of particles alive at time $t$ be $N(t)$, and for each particle $u \in N(t)$ denote its position at time $t$ by $X_u(t)$. This setup will be formalised later.

Fix a continuous function $f:[0,\infty)\rightarrow\Rb$. We say that $f$ satisfies the \emph{usual conditions} if:
\renewcommand{\labelenumi}{(\arabic{enumi})}
\begin{enumerate}
\item $f(0)=0$;
\item $f$ is twice continuously differentiable;
\item $\lim_{t\to\infty}\frac{1}{t}\int_0^t |f''(s)| ds = 0$.
\end{enumerate}
We assume unless otherwise stated that these conditions hold. After we obtain our results it will be possible to relax them slightly using simple uniform approximation arguments -- see Section \ref{extensions} -- but for now the stronger conditions on $f$ will allow us to apply integration by parts theorems without any complications.

Fix $L>0$ and let
\[S=S(f):=\limsup_{t\to\infty}\frac{1}{t}\int_0^t f'(s)^2 ds\]
and
\[\tilde S = r - \frac{\pi^2}{8L^2} - \frac{S}{2}.\]
Define
\[\hat N(t) = \left\{u \in N(t) : |X_u(s)-f(s)|< L \hs \forall s\leq t\right\},\]
the set of particles that have stayed within distance $L$ of the function $f$ for all times $s\leq t$. We wish to study the number of particles in $\hat{N}(t)$ at large times. Let
\[\Upsilon = \inf\{t\geq0 : \hat N(t)=\emptyset\}.\]
We call $\Upsilon$ the \emph{extinction time} for the process, and say that the process has become \emph{extinct} by time $t$ if $\Upsilon\leq t$. When we talk about \emph{non-extinction}, we mean the event $\Upsilon = \infty$.

\subsection{The main result}
We now state our main result. Most of this article will be concerned with proving this theorem.

\begin{thm}\label{mainthm}
If $\tilde S < 0$ or $\tilde S = - \infty$, then $\Upsilon<\infty$ almost surely.
On the other hand, if $\tilde S > 0$, then $\Pb(\Upsilon=\infty)>0$ and almost surely on non-extinction we have
\[\limsup_{t\to\infty}\frac{1}{t}\log|\hat{N}(t)| = r - \frac{\pi^2}{8L^2} - \liminf_{t\to\infty}\frac{1}{2t}\int_0^t f'(s)^2 ds\]
and
\[\liminf_{t\to\infty}\frac{1}{t}\log|\hat{N}(t)| = r - \frac{\pi^2}{8L^2} - \limsup_{t\to\infty}\frac{1}{2t}\int_0^t f'(s)^2 ds.\]
\end{thm}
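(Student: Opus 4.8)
The plan is to build a single additive martingale adapted to the tube around $f$, exploit the spine change of measure it induces, and then read off both one-sided bounds in the theorem from the spine decomposition together with a comparison between tubes of slightly different radii. Writing $Y_u(t)=X_u(t)-f(t)$ for the displacement of a particle from $f$, and for $L'>0$ letting $\hat N^{L'}$ be $\hat N$ with $L$ replaced by $L'$, I would set
\[
\zeta^{L'}_u(t)=\cos\!\Big(\frac{\pi Y_u(t)}{2L'}\Big)\exp\!\Big(f'(t)Y_u(t)-\int_0^t f''(s)Y_u(s)\,ds+\int_0^t\!\Big(\frac{f'(s)^2}{2}+\frac{\pi^2}{8L'^2}-r\Big)ds\Big)
\]
and $Z^{L'}(t)=\sum_{u\in\hat N^{L'}(t)}\zeta^{L'}_u(t)$, so that $Z:=Z^{L}$ is the object of interest and $Z(0)=1$. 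Integration by parts (using $f(0)=0$) rewrites the deterministic exponent as a Girsanov exponential $\int_0^t f'(s)\,dB_u(s)-\tfrac12\int_0^t f'(s)^2ds+(\tfrac{\pi^2}{8L'^2}-r)t$; an It\^o computation then shows each $\zeta^{L'}_u$ has drift $-r\,\zeta^{L'}_u\,dt$ while $u$ is in the tube, exactly cancelling the rate-$r$ dyadic branching, while the factor $\cos(\pi Y_u/2L')$ vanishes at the tube boundary so a particle leaves no jump on exit. A many-to-one calculation together with the Dirichlet eigenfunction identity for $\tfrac12\frac{d^2}{dx^2}$ on $(-L',L')$ gives $\mathbb E[Z^{L'}(t)]=1$, so $Z^{L'}$ is a genuine martingale. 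Under $\tilde{\mathbb P}^{L'}$ with density $Z^{L'}(t)$ on $\mathcal F_t$ the spine is forced to stay in the tube: the Girsanov part adds drift $f'(t)$ to its driving Brownian motion and the eigenfunction transform then turns the resulting driftless displacement into a \emph{time-homogeneous} Brownian motion on $(-L',L')$ conditioned never to leave --- positive recurrent with stationary density $\propto\cos^2(\pi x/2L')$ --- while the spine branches at rate $2r$, each extra subtree an independent copy of the killed BBM; this is the decomposition of \cite{hardy_harris:new_spine_formulation}.

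\emph{Upper bound and the extinction cases.} For $L'>L$ and $u\in\hat N(t)\subseteq\hat N^{L'}(t)$ one has $\cos(\pi Y_u(t)/2L')\ge\cos(\pi L/2L')>0$, and since $|Y_u|<L'$, $|f'(t)|\le|f'(0)|+\int_0^t|f''|=o(t)$, and $\int_0^t|f''|=o(t)$ by the usual conditions, the remaining exponential factor is $e^{o(t)}$ uniformly in $u$. As $Z^{L'}$ converges a.s.\ to a finite limit, $\sup_t Z^{L'}(t)<\infty$ a.s., and rearranging gives, a.s.,
\[
\frac1t\log|\hat N(t)|\le r-\frac{\pi^2}{8L'^2}-\frac{1}{2t}\int_0^t f'(s)^2\,ds+o(1).
\]
Taking $\limsup_t$ and $\liminf_t$ and then $L'\downarrow L$ along a sequence yields the two ``$\le$'' inequalities. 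If $\tilde S<0$ or $\tilde S=-\infty$, this bound forces $\liminf_t\frac1t\log|\hat N(t)|<0$, hence $|\hat N(t)|=0$ along a subsequence; since $\hat N(t)=\emptyset$ implies $\hat N(t')=\emptyset$ for all $t'\ge t$, this gives $\Upsilon<\infty$ a.s.

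\emph{Lower bound and positivity when $\tilde S>0$.} The spine decomposition reads $\tilde{\mathbb E}[Z(t)\mid\tilde{\mathcal G}_\infty]=\zeta_\xi(t)+\sum_{i:\tau_i\le t}\zeta_\xi(\tau_i)$, where $\tau_i$ are the spine branch times. Using $\cos\le1$ and $|f'(s)Y+\int_0^s f''Y|\le L|f'(s)|+L\int_0^s|f''|=o(s)$ deterministically on the tube, $\tilde{\mathbb E}[\zeta_\xi(s)]\le e^{o(s)}\exp(\int_0^s(\tfrac{f'(u)^2}{2}+\tfrac{\pi^2}{8L^2}-r)du)$, whose exponential rate has $\limsup$ equal to $-\tilde S<0$; hence $\int_0^\infty 2r\,\tilde{\mathbb E}[\zeta_\xi(s)]\,ds<\infty$, so the decomposition series is $\tilde{\mathbb P}$-a.s.\ finite. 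By the measure-theoretic dichotomy of \cite{lyons_et_al:conceptual_llogl_mean_behaviour_bps} this forces $Z_\infty>0$ $\tilde{\mathbb P}$-a.s., whence $Z$ is uniformly integrable, $\mathbb E[Z_\infty]=1$, and, since $\{Z_\infty>0\}\subseteq\{\Upsilon=\infty\}$, $\mathbb P(\Upsilon=\infty)>0$. On $\{Z_\infty>0\}$, bounding $Z(t)\le|\hat N(t)|\,e^{o(t)}\exp(\int_0^t(\tfrac{f'^2}{2}+\tfrac{\pi^2}{8L^2}-r))$ and using $Z(t)\ge\tfrac12 Z_\infty$ for large $t$ gives $\frac1t\log|\hat N(t)|\ge r-\tfrac{\pi^2}{8L^2}-\tfrac1{2t}\int_0^t f'(s)^2ds-o(1)$, whose $\limsup_t$ and $\liminf_t$ supply the matching ``$\ge$'' inequalities.

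\emph{The main obstacle.} What remains --- and this is the genuinely hard part, to be done in Section~\ref{inv_spine} --- is to upgrade ``on $\{Z_\infty>0\}$'' to ``a.s.\ on non-extinction'' by showing $\mathbb P(\Upsilon=\infty,\,Z_\infty=0)=0$. The trouble is time-inhomogeneity: the process restarted from a particle alive in the tube at a large time $T$ faces the drift $-f'(T+\cdot)$, which is not the original drift, so neither analytic $0$--$1$ laws nor a naive restarting-in-law argument applies, and a second-moment approach runs into an awkward time-dependent variance. I would instead argue that, uniformly in $T$, on $\{\Upsilon>T\}$ there is probability bounded below that some particle (or a descendant a bounded time later) sits near the centre of the tube, and that from such a configuration the appropriately shifted martingale has a strictly positive limit with probability bounded below --- the non-degeneracy as $T\to\infty$ coming precisely from the \emph{time-homogeneity} of the conditioned spine diffusion and the constancy of the branching rate $r$ --- and then run a L\'evy $0$--$1$/Borel--Cantelli argument along a sequence of times to exclude $\{\Upsilon=\infty,\,Z_\infty=0\}$. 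Combining this with the two parts above closes the proof.
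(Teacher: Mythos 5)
Most of your argument tracks the paper's closely: your $\zeta^{L'}_u$ is exactly the paper's single-particle martingale after the integration by parts of Lemma \ref{int_lem}, your upper bound via $Z^{f,L'}$ with $L'\downarrow L$ is Proposition \ref{limsup_prop}, and your use of the spine decomposition plus the Lyons--Pemantle--Peres dichotomy to get $\Pb[Z(\infty)]=1$ when $\tilde S>0$, followed by the lower bound on $\{Z(\infty)>0\}$, is Propositions \ref{uiprop} and \ref{liminf_prop}. One genuine (and pleasant) variation: you deduce almost sure extinction for $\tilde S<0$ directly from the upper bound ($\liminf_t\frac1t\log|\hat N(t)|\leq\tilde S<0$ forces $|\hat N(t)|=0$ along a sequence, hence for all large $t$), whereas the paper runs a separate computation with a widened tube and the identity of Lemma \ref{extinction_lem}; your route is shorter and valid, since Proposition \ref{limsup_prop} holds for all $S\in[0,\infty]$.

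The gap is in the step you yourself flag as the hard one: upgrading from $\{Z(\infty)>0\}$ to non-extinction. Your sketch rests on the assertion that, uniformly in $T$, a particle surviving to time $T$ has (or soon acquires) a descendant near the centre of the tube from which the shifted martingale $Z^{g_T,L}$ has a strictly positive limit with probability bounded below. That uniform lower bound is false as stated: the restarted problem sees the drift $-f'(T+\cdot)$, and even for $x$ at the exact centre the quantity $\Pb_{x}(Z^{g_T,L}(\infty)>0)$ can be made arbitrarily small for particular $T$ (take $f$ with a long stretch of near-critical gradient beginning at $T$; $\tilde S>0$ only constrains Ces\`aro averages of $f'^2$, not its behaviour on any window). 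The time-homogeneity of the $\Qt$-spine diffusion does not help here because the event $\{\Upsilon>T\}$ lives under $\Pb$, and the relevant non-degeneracy is a property of the deterministic future of $f$, not of the conditioned motion. The paper's resolution is quantitative on exactly this point: Lemma \ref{tube_filling} shows only that the set of ``good'' times $t$ --- those for which $\Pb_{x-f(t)}(Z^{g_t,L}(\infty)>0)\geq\alpha$ across most of the cross-section --- has lower asymptotic density at least $\beta\tilde S/r$, and Lemma \ref{spine_U} is then needed to show that a surviving particle actually spends a positive fraction of its time at such good space-time points (the proof bounds the occupation time of a Brownian motion near the tube boundary and transfers this to the branching system by many-to-one with the Girsanov correction). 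Finally, the conclusion is not a L\'evy $0$--$1$/Borel--Cantelli argument but a direct count: a particle with occupation time $\geq\frac{\tilde S}{2r}T$ in $U_\alpha$ has, with high probability, at least $m$ births there, each launching an independent subpopulation whose martingale limit is positive with probability $\geq\alpha$, giving $\Pb(\Upsilon=\infty,\,Z(\infty)=0)\leq(1-\alpha)^m+2\varepsilon/3$. Without these three ingredients your closing step does not go through.
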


\noindent
This theorem can be extended slightly to cover more general functions, and we give some results in this direction in Section \ref{extensions}.

\section{Examples}

\begin{ex}
Take $f(t) = \lambda t$. If $r < \frac{\lambda^2}{2} + \frac{\pi^2}{8L^2}$ then we have extinction almost surely; if $r > \frac{\lambda^2}{2} + \frac{\pi^2}{8L^2}$ then on non-extinction
\[\lim_{t\to\infty}\frac{1}{t}\log|\hat{N}(t)| = r - \frac{\pi^2}{8L^2} - \frac{\lambda^2}{2}.\]
For comparison, Git \cite{git:almost_sure_path_properties} gives a large deviations growth rate of $r - \lambda^2/2$ along such paths with $\lambda^2 < 2r$, so we see an extra cost of $\pi^2/8L^2$ for insisting that particles stay within a fixed distance $L$ of $f$ over the whole lifetime.
\end{ex}

\begin{ex}
Let $f(t) = t^\beta$, $\beta\in(0,1)$, or $f(t) = \log(t+1)$.
Provided that $r > \frac{\pi^2}{8L^2}$, on non-extinction we have
\[\lim_{t\to\infty}\frac{1}{t}\log|\hat{N}(t)| = r - \frac{\pi^2}{8L^2}.\]
Thus just as many particles follow these paths as the constant zero path. The same applies to any function with $S=0$ (provided that it satisfies the usual conditions). [Note that when trying to apply our result to $f(t)=t^\beta$, we have a small problem in that $f'(0)=\infty$. We can however approximate $f$ uniformly with, for example, $f_\varepsilon(t)= (t+\varepsilon)^\beta - \varepsilon^\beta$. For each $\varepsilon$ we get $S(f_\varepsilon) = 0$, and a very simple limiting argument gives the desired result.]
\end{ex}

\begin{ex}
Let $f(t) = \sqrt{2r}t - ct^\beta$, $\beta\in(0,1)$, or $f(t) = \sqrt{2r}t - c\log(t+1)$.
Then $S(f)=2r$ so we have extinction almost surely for any $L$ -- and the same applies to $f(t) = \sqrt{2t}- g(t)$ for any $g$ with $S(g)=0$.  This can be interpreted as saying that no particles travel for all time along any path ``close'' to criticality, and should be compared with the results of Bramson \cite{bramson:maximal_displacement_BBM} on the speed of the right-most particle.
\end{ex}

\begin{ex}
Let $f(t) = \lambda(t+1)\sin(\log(t+1))$. If $r < \frac{\lambda^2}{\sqrt5}\left(\frac{1+\sqrt5}{2}\right) + \frac{\pi^2}{8L^2}$ then we have extinction almost surely; if $r > \frac{\lambda^2}{\sqrt5}\left(\frac{1+\sqrt5}{2}\right) + \frac{\pi^2}{8L^2}$ then, on non-extinction, the number of particles alive at time $t$ oscillates, with
\[\liminf_{t\to\infty}\frac{1}{t}\log|\hat{N}(t)| = r - \frac{\pi^2}{8L^2} - \frac{\lambda^2}{\sqrt5}\left(\frac{\sqrt5+1}{2}\right)\]
and
\[\limsup_{t\to\infty}\frac{1}{t}\log|\hat{N}(t)| = r - \frac{\pi^2}{8L^2} - \frac{\lambda^2}{\sqrt5}\left(\frac{\sqrt5-1}{2}\right).\]
(Note the appearance of the golden ratio!)
\end{ex}

The reason for this oscillation becomes clearer when we consider the following simpler (but perhaps less natural) example.

\begin{ex}\label{nondiff}
Define a continuous function $f:[0,\infty)\to\Rb$ by setting $f(t)=0$ for $t\in[0,1]$ and
\[f'(t) = \left\{ \begin{array}{ll} 0 & \hbox{ if } \hsl 2^{2k} \leq t < 2^{2k+1} \hsl \hbox{ for some } \hsl k\in\{0,1,2,\ldots\}\\
									1 & \hbox{ if } \hsl 2^{2k+1} \leq t < 2^{2k+2} \hsl \hbox{ for some } \hsl k\in\{0,1,2,\ldots\}\end{array}\right. .\]
Then, provided that $r > \frac{1}{3} + \frac{\pi^2}{8L^2}$, on non-extinction we have
\[\liminf_{t\to\infty}\frac{1}{t}\log|\hat{N}(t)| = r - \frac{\pi^2}{8L^2} - \frac{1}{3}\]
and
\[\limsup_{t\to\infty}\frac{1}{t}\log|\hat{N}(t)| = r - \frac{\pi^2}{8L^2} - \frac{1}{6}.\]
The idea here is that the number of particles grows quickly when $f'(t)=0$, but much more slowly when $f'(t)=1$ as the steep gradient means that particles have to struggle to follow the path for a long time. As the size of the intervals $[2^n, 2^{n+1}]$ grows exponentially, the behaviour of the number of particles at time $t$ is dominated by the behaviour on the most recent such interval. [We note that this choice of $f$ is not twice differentiable; however, it can be uniformly approximated by twice differentiable functions, and it is easily checked that our results still hold.]
\end{ex}

\section{The spine setup}

\noindent
Consider a dyadic one-dimensional branching Brownian motion, branching at rate $r$, with associated probability measures $\Pb_x$ under which
\begin{itemize}
\item{we begin with a root particle, $\emptyset$, at $x$;}
\item{if a particle $u$ is in the tree then all its ancestors, denoted $\{v: v<u\}$, are also in the tree;}
\item{each particle $u$ has a lifetime $\sigma_u$, which is exponentially distributed with parameter $r$, and a fission time $S_u = \sum_{v\leq u}\sigma_v$;}
\item{at the fission time $S_u$, $u$ has disappeared and been replaced by two children $u0$ and $u1$, which inherit the position of their parent;}
\item{each particle $u$ has a position $X_u(t) \in \Rb$ at each time $t\in [S_u-\sigma_u, S_u)$;}
\item{each particle $u$, while alive, moves according to a standard Brownian motion started from $X_u(S_u-\sigma_u)$.}
\end{itemize}
For convenience, we extend the position of a particle $u$ to all times $t\in[0, S_u)$, to include the paths of all its ancestors:
\[X_u(t):= X_v(t) \hbox{ if } v\leq u \hbox{ and } S_v - \sigma_v  \leq t < S_v.\]
We recall that we defined $N(t)$ to be the set of particles alive at time $t$,
\[N(t):=\{u: S_u - \sigma_u \leq t < S_u\},\]
and also that
\[\hat N(t) := \left\{u \in N(t) : |X_u(s)-f(s)|< L \hs \forall s\leq t\right\}.\]

We choose from our BBM one distinguished line of descent or \emph{spine} -- that is, a subset $\xi$ of the tree such that $\xi\cap N(t)$ contains exactly one particle for each $t$ and if $u\in \xi$ and $v<u$ then $v\in \xi$. We make this choice as follows:
\begin{itemize}
\item{the initial particle $\emptyset$ is in the spine;}
\item{at the fission time of node $u$ in the spine, the new spine particle is chosen uniformly at random from the two children $u0$ and $u1$ of $u$.}
\end{itemize}
We call the resulting probability measure (on the space of \emph{marked trees with spines}) $\Pt_x$. The full construction of $\Pt_x$ can be found in \cite{hardy_harris:new_spine_formulation}.

\subsection{Filtrations}
We use three different filtrations, $\Fg_t$, $\Ft_t$ and $\Gg_t$, to encapsulate different amounts of information. We give descriptions of these filtrations here, but the reader is referred to \cite{hardy_harris:new_spine_formulation} for the full definitions.

\begin{itemize}
\item{$\Fg_t$ contains the all the information about the marked tree up to time $t$. However, it does not know which particle is the spine at any point.}
\item{$\Ft_t$ contains all the information about both the marked tree and the spine up to time $t$.}
\item{$\Gg_t$ contains just the spatial information about the spine up to time $t$; it does not know anything about the rest of the tree.}
\end{itemize}
We note that $\Fg_t \subseteq \Ft_t$ and $\Gg_t \subseteq \Ft_t$, and also that $\Pt_x$ is an extension of $\Pb_x$ in that $\Pb_x = \Pt_x |_{\Fg_\infty}$.

\subsection{Martingales and a change of measure}\label{measure_change}

Under $\Pt$, the path of the spine $(\xi_t,\hsl t\geq 0)$ is simply a Brownian motion, and thus we can apply It\^{o}'s formula to see that
\[V_t := e^{\pi^2 t / 8L^2} \cos\left(\frac{\pi}{2L}(\xi_t-f(t))\right)e^{\int_0^t f'(s) d\xi_s - \frac{1}{2}\int_0^t f'(s)^2 ds}\]
is a $\Gg_t$-martingale. By stopping the process at the first exit time of the spine particle from the tube $\{(x,t): |f(t)-x|< L\}$, we obtain also that
\[\zeta(t) := e^{\pi^2 t / 8L^2} \cos\left(\frac{\pi}{2L}(\xi_t-f(t))\right)e^{\int_0^t f'(s) d\xi_s - \frac{1}{2}\int_0^t f'(s)^2 ds} \ind_{\{|f(s)-\xi_s| < L \hsl \forall s\leq t\}}\]
is a $\Gg_t$-martingale. We call this martingale $\zeta$ the \emph{single-particle martingale}.

\begin{defn}
We define an $\Ft_t$-adapted martingale by
\[\tilde{\zeta}(t) = 2^{|\xi_t|} \times e^{-rt} \times \zeta(t),\]
where $|\xi_t|$ denotes the generation of the spine at time $t$, $|\xi_t|=|\{v: v<\xi_t\}|$. The proof that this process is an $\Ft_t$-martingale is given in \cite{hardy_harris:new_spine_formulation}.

We note that if $f$ is an $\Ft_t$-measurable function then we can write:
\begin{equation}
f(t)=\sum_{u\in N_t}f_u(t) \ind_{\xi_t=u} \label{fdecomp}
\end{equation}
where each $f_u$ is $\Fg_t$-measurable. It is also shown in \cite{hardy_harris:new_spine_formulation} that if we define
\[Z(t) := \sum_{u\in N(t)} e^{-rt}\zeta_u(t),\]
where $\zeta_u$ is the $\Fg_t$-adapted process defined via the representation of $\zeta$ as in (\ref{fdecomp}), then
\[Z(t) = \Pt[\tilde \zeta(t) | \Fg_t].\]
One may easily use this representation to show that $Z$ is an $\Fg_t$-martingale. This martingale is the main object of interest, and we write it out in full:
\[Z(t) = \sum_{u\in \hat N(t)} e^{(\pi^2 / 8L^2 - r)t} \cos\left(\frac{\pi}{2L}(X_u(t)-f(t))\right)e^{\int_0^t f'(s) dX_u(s) - \frac{1}{2}\int_0^t f'(s)^2 ds}.\]
\end{defn}

\begin{defn}
We define a new measure, $\Qt_x$, via
\[\left.\frac{d\Qt_x}{d\Pt_x}\right|_{\Ft_t} = \frac{\tilde{\zeta}(t)}{\tilde{\zeta}(0)}.\]
Also, for convenience, define $\Qb_x$ to be the projection of the measure $\Qt$ onto $\Fg_\infty$; then
\[\left.\frac{d\Qb_x}{d\Pb_x}\right|_{\Fg_t} = \frac{Z(t)}{Z(0)}.\]
\end{defn}

\begin{lem}
Under $\Qt_x$,
\begin{itemize}
\item when at position $x$ at time $t$ the spine $\xi$ moves as a Brownian motion with drift
\[f'(t) - \frac{\pi}{2L} \tan\left(\frac{\pi}{2L}(x - f(t))\right);\]
\item the fission times along the spine occur at an accelerated rate $2r$;
\item at the fission time of node $v$ on the spine, the single spine particle is replaced by two children, and the new spine particle is chosen uniformly from the two children;
\item the remaining child gives rise to an independent subtree, which is not part of the spine and is determined by an independent copy of the original measure $\Pb$ shifted to the position and time of creation.
\end{itemize}
\end{lem}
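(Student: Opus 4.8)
The plan is to read the four assertions off the multiplicative structure of the Radon--Nikodym density and to lean on the general spine framework for the bookkeeping. Restricted to $\Ft_t$ the density is $\tilde{\zeta}(t)/\tilde{\zeta}(0)$, and we factorise
\[\tilde{\zeta}(t) = \zeta(t) \cdot 2^{|\xi_t|}e^{-rt},\]
where $\zeta(t)$ is $\Gg_t$-measurable --- it depends only on the spatial path of the spine --- whereas $2^{|\xi_t|}e^{-rt}$ depends only on the generation process $(|\xi_t|)_{t\ge0}$ of the spine. Under $\Pt_x$ a marked tree with spine decomposes, conditionally, into four independent ingredients: the spine path $(\xi_t)$, a standard Brownian motion; the fission times along the spine, a Poisson process of rate $r$; the uniform choices of which child continues the spine at each fission; and the subtrees hanging off the non-spine children, which given the spine are independent copies of the $\Pb$-BBM started from the relevant space--time points. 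Since $\tilde{\zeta}(t)$ is a product of a $\Gg_t$-martingale and a martingale adapted to the generation process of the spine, the machinery of \cite{hardy_harris:new_spine_formulation} lets us re-weight each ingredient separately, so it remains only to identify the new law of each.

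For the motion of the spine: on the event $\{|\xi_s - f(s)| < L \text{ for all } s\le t\}$ we have $\zeta(t) = V_t > 0$, and It\^o's formula applied to $V_t$ --- expanding $\log\cos$ via $\frac{d}{dx}\log\cos x = -\tan x$, and noting that the prefactor $e^{\pi^2 t/8L^2}$ is precisely what cancels the finite-variation terms (this is the computation that makes $V_t$ a martingale in the first place) --- gives
\[V_t = V_0\,\exp\left(\int_0^t \theta_s\,d\xi_s - \frac12\int_0^t \theta_s^2\,ds\right), \qquad \theta_s := f'(s) - \frac{\pi}{2L}\tan\left(\frac{\pi}{2L}(\xi_s - f(s))\right).\]
Thus $\zeta(t)/\zeta(0)$ is a stochastic exponential, and Girsanov's theorem shows that under $\Qt_x$ the spine solves $d\xi_t = d\hat{B}_t + \theta_t\,dt$ for a $\Qt_x$-Brownian motion $\hat{B}$; this is exactly the claimed drift $f'(t) - \frac{\pi}{2L}\tan(\frac{\pi}{2L}(\xi_t - f(t)))$.

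For the branching: $(|\xi_t|)_{t\ge0}$ is a rate-$r$ Poisson process under $\Pt_x$ and $2^{|\xi_t|}e^{-rt}$ is its associated exponential martingale; tilting a Poisson process by this martingale multiplies its rate by $2$, yielding the accelerated fission rate $2r$, and since the factor does not favour either child the uniform choice of spine-child is untouched. Finally, because $\tilde{\zeta}(t)$ is a function of the spine alone, conditioning on the spine leaves the law of the off-spine subtrees unchanged, so they remain independent $\Pb$-BBMs attached at their space--time creation points; assembling the four pieces proves the lemma. The only step that genuinely requires the framework of \cite{hardy_harris:new_spine_formulation} rather than a bare-hands argument --- and hence the main, if mild, obstacle --- is the claim that these three re-weightings may be carried out independently, i.e.\ that the change of measure respects the conditional-independence decomposition of $\Pt_x$; this is exactly what the product form of $\tilde{\zeta}$ delivers.
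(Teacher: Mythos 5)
Your proposal is correct and is precisely the standard argument: the paper itself offers no proof of this lemma, simply delegating it to the spine framework of \cite{hardy_harris:new_spine_formulation}, and what you have written is the canonical decomposition one finds there (Girsanov tilting of the spine's motion by the stochastic exponential $\zeta(t)/\zeta(0)$, exponential tilting of the rate-$r$ birth process by $2^{|\xi_t|}e^{-rt}$, and invariance of the child-selection and off-spine subtrees because the density does not depend on them). Your It\^o computation identifying $\zeta(t)/\zeta(0)$ as $\exp(\int_0^t\theta_s\,d\xi_s-\tfrac12\int_0^t\theta_s^2\,ds)$ with $\theta_s=f'(s)-\frac{\pi}{2L}\tan(\frac{\pi}{2L}(\xi_s-f(s)))$ checks out, and you correctly flag that the only non-elementary step is the conditional-independence bookkeeping that the cited framework supplies.
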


\noindent
Thus, under $\Qt_x$, the spine remains within distance $L$ of $f(t)$ for all times $t\geq0$. To see this explicitly, note that
\[\Qt_x(\xi_t\not\in\hat{N}(t)) = \Pt_x\left[\ind_{\{\xi_t\not\in\hat{N}(t)\}}\tilde\zeta(t)\right] = 0\]
by definition of $\tilde\zeta(t)$. All other particles, once born, move like independent standard Brownian motions but -- as under $\Pb_x$ -- we imagine them being ``killed'' instantly upon leaving the tube of width $2L$ about $f$. In reality they are still present in the system, but make no contribution to $Z$ once they have left the tube.

It is possible to show that the motion of the process $\xi_t - f(t)$ has equilibrium distribution
\[\mu(dx)=\frac{1}{L}\cos^2\left(\frac{\pi x}{2L}\right)\ind_{\{x\in(-L,L)\}}dx,\]
although we will not need to use this property.

\begin{rmk}
Note that $\hat{N}$, and hence $Z$ and $\Qt$, depend upon the function $f$ and the constant $L$. Usually these will be implicit, but occasionally we shall write $\hat{N}^{f,L}$, $Z^{f,L}$ and $\Qt^{f,L}$ to emphasise the choice of $f$ and $L$ in use at the time.
\end{rmk}

\subsection{Spine tools}

We now state the spine decomposition theorem, which will be a vital tool in our investigation. It allows us to relate the growth of the whole process to just the behaviour along the spine. For a proof the reader is again referred to \cite{hardy_harris:new_spine_formulation}.

\begin{thm}[Spine decomposition]
We have the following decomposition of $Z$:
\[\Qt_x[Z(t)|\Gg_\infty] = \int_0^t 2r e^{-rs}\zeta(s) ds + e^{-rt}\zeta(t).\]
\end{thm}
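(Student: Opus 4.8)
The plan is to condition on the spine trajectory and decompose the population $N(t)$ according to where along the spine each particle branched off. Under $\Qt_x$ the spine performs the drift-diffusion of the preceding lemma and, independently of its path (the Radon--Nikodym factor $2^{|\xi_t|}e^{-rt}$ touches only the number of fissions while $\zeta(t)$ touches only the path), fissions along the spine at the jump times $T_1<T_2<\cdots$ of a rate-$2r$ Poisson process; at each $T_i$ the off-spine child begins a fresh subtree with the law of $\Pb_{\xi_{T_i}}$, independent of the spine and of the other subtrees. Every $u\in N(t)$ is either $\xi_t$ (contributing $e^{-rt}\zeta_{\xi_t}(t)=e^{-rt}\zeta(t)$) or lies in exactly one of the subtrees born at some $T_i\leq t$, so I would begin by writing
\[Z(t) \;=\; e^{-rt}\zeta(t) \;+\; \sum_{i\,:\,T_i\leq t}\; \sum_{\substack{v\in N(t)\\ v\text{ in }i\text{th subtree}}} e^{-rt}\zeta_v(t).\]

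The key algebraic step is to factor $e^{-rt}\zeta_v(t)$ at the branch time $T_i$. For $v$ in the $i$th subtree one has $X_v(s)=\xi_s$ for $s\leq T_i$, so the prefactor $e^{\pi^2 t/8L^2}$, the stochastic and ordinary integrals $\int_0^t f'\,dX_v$ and $\int_0^t f'(s)^2\,ds$, and the tube indicator all split at $T_i$; using that the spine never leaves the tube under $\Qt_x$ (so the part of the indicator on $[0,T_i]$ equals $1$), this yields
\[e^{-rt}\zeta_v(t) \;=\; \frac{e^{-rT_i}\zeta(T_i)}{\cos\!\left(\tfrac{\pi}{2L}(\xi_{T_i}-f(T_i))\right)} \;\times\; e^{-r(t-T_i)}\zeta^{(i)}_v(t),\]
where $\zeta^{(i)}_v$ is the same single-particle functional for the $i$th subtree run from time $T_i$. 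Summing the right-hand factor over $v$ in that subtree gives $Z^{(T_i,\xi_{T_i})}(t)$, the exact analogue of the martingale $Z$ for a BBM started from the space-time point $(T_i,\xi_{T_i})$.

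Now I would take $\Qt_x[\,\cdot\,\mid\Gg_\infty]$ in two stages. Conditioning first on the spine path and the fission times, the $i$th subtree is an independent $\Pb_{\xi_{T_i}}$-branching Brownian motion, and since $Z^{(s,y)}$ is a martingale (the same argument that makes $Z$ a martingale) with value $\cos(\tfrac{\pi}{2L}(y-f(s)))$ at its own starting time $s$, we get $\Qt_x[Z^{(T_i,\xi_{T_i})}(t)\mid\text{spine path},(T_i)]=Z^{(T_i,\xi_{T_i})}(T_i)=\cos(\tfrac{\pi}{2L}(\xi_{T_i}-f(T_i)))$, so the conditional contribution of the $i$th subtree collapses to $e^{-rT_i}\zeta(T_i)$. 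Taking a further conditional expectation over the Poisson fission times given $\Gg_\infty$ (along which $\zeta$ is now deterministic and nonnegative, $\cos$ being positive inside the tube) and applying the mean-measure formula,
\[\Qt_x\!\left[\,\sum_{i\,:\,T_i\leq t} e^{-rT_i}\zeta(T_i)\,\Big|\,\Gg_\infty\,\right] \;=\; \int_0^t 2r\,e^{-rs}\zeta(s)\,ds,\]
and adding back the $\Gg_\infty$-measurable term $e^{-rt}\zeta(t)$ gives the stated identity.

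The main obstacle --- indeed the only place where care is needed --- is the bookkeeping behind the second paragraph: verifying that the single-particle functional really does factor at $T_i$ with exactly the displayed $\cos$-normalisation, that $Z^{(s,y)}$ is a genuine martingale whose value at its starting time $s$ is $\cos(\tfrac{\pi}{2L}(y-f(s)))$ (so the optional-stopping identity may be applied there), and that interchanging the a.s.-finite sum over subtrees with the conditional expectation is legitimate --- the last point being automatic from nonnegativity of $Z$ and $\zeta$ via Tonelli. One should also make the independence assertions precise, namely that under $\Qt_x$ the spine path and the fission-time Poisson process are independent and the subtrees are conditionally independent copies of $\Pb$, which is the content of the change-of-measure lemma.
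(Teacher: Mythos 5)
The paper does not prove this theorem itself but defers to Hardy \& Harris \cite{hardy_harris:new_spine_formulation}, and your argument is precisely the standard proof given there: split $Z(t)$ into the spine term plus the subtrees born off the spine, factor each subtree's contribution at its birth time so that the $\cos$-normalisation cancels against the subtree martingale's initial value, and then average over the rate-$2r$ Poisson fission times to produce the integral term. Your factorisation identity and the use of the martingale property of the shifted $Z^{(s,y)}$ under $\Pb$ are both correct, so the proposal is sound and matches the intended proof.
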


The spine decomposition is usually used in conjunction with a result like the following -- a proof of a more general form of this lemma can be found in \cite{lyons_peres:probability_on_trees}.

\begin{lem}
\label{stdmeas}
Let $Z(\infty)=\limsup Z(t)$. Then
\[\Qb \ll \Pb \hs \Leftrightarrow \hs Z(\infty)<\infty \hs \Qb\hbox{-a.s. } \Leftrightarrow \hs \Qb = Z(\infty) \Pb\]
and
\[\Qb \perp \Pb \hs \Leftrightarrow \hs Z(\infty)=\infty \hs \Qb\hbox{-a.s. } \Leftrightarrow \hs \Pb[Z(\infty)]=0.\]
\end{lem}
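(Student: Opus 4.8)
The statement is the classical Lebesgue-decomposition picture for a measure change driven by a non-negative martingale, and my plan is to deduce both chains of equivalences from a single identity: the Lebesgue decomposition of $\Qb$ with respect to $\Pb$ on $\Fg_\infty$,
\begin{equation}
\Qb(A)\;=\;\int_A Z(\infty)\,d\Pb\;+\;\Qb\big(A\cap\{Z(\infty)=\infty\}\big),\qquad A\in\Fg_\infty.\label{lebdec}
\end{equation}
Granting \eqref{lebdec}, everything follows by inspection. The first term on the right is absolutely continuous with respect to $\Pb$; the second is carried by $\{Z(\infty)=\infty\}$, which is $\Pb$-null since $Z$ is a non-negative $\Pb$-martingale and hence converges $\Pb$-a.s. to a finite limit. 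So by uniqueness of the Lebesgue decomposition, $\Qb\ll\Pb$ exactly when the singular term vanishes, i.e. $\Qb(Z(\infty)=\infty)=0$, i.e. $Z(\infty)<\infty$ $\Qb$-a.s., and in that case \eqref{lebdec} reads $\Qb=Z(\infty)\Pb$; the reverse implication $\Qb=Z(\infty)\Pb\Rightarrow\Qb\ll\Pb$ is trivial. For the second line, taking $A=\Omega$ in \eqref{lebdec} gives $\Pb[Z(\infty)]+\Qb(Z(\infty)=\infty)=1$, so $\Qb\perp\Pb$ exactly when the absolutely continuous term has zero mass, i.e. $\Pb[Z(\infty)]=0$, equivalently $\Qb(Z(\infty)=\infty)=1$, i.e. $Z(\infty)=\infty$ $\Qb$-a.s.

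So the work is in establishing \eqref{lebdec}. First I would record the two convergence statements. Under $\Pb$, $Z$ is a non-negative c\`adl\`ag martingale, so $Z(t)\to Z(\infty)$ $\Pb$-a.s. with $Z(\infty)\in[0,\infty)$, and (using right-continuity on compacts) $\sup_{t\ge0}Z(t)<\infty$ $\Pb$-a.s. Under $\Qb$, note that for each $t$ we have $\Qb(Z(t)=0)=\Pb[\ind_{\{Z(t)=0\}}Z(t)]=0$; since a non-negative c\`adl\`ag $\Pb$-martingale is absorbed at $0$, this upgrades to $Z(t)>0$ for all $t$, $\Qb$-a.s. A short computation using the change of measure on $\Fg_t$ then shows that $(1/Z(t))_{t\ge0}$ is a non-negative $\Qb$-supermartingale, so it converges $\Qb$-a.s. to a finite limit; hence $Z(t)\to Z(\infty)\in(0,\infty]$ $\Qb$-a.s., and, as under $\Pb$, the events $\{Z(\infty)<\infty\}$ and $\{\sup_t Z(t)<\infty\}$ differ by a $\Qb$-null set.

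For \eqref{lebdec} itself I would truncate. Fix $a>0$ and set $T_a=\inf\{t:Z(t)>a\}$. Since the jumps of $Z$ occur only at fission times and simply duplicate one strictly positive summand, $\Delta Z(t)\le Z(t-)$, so the stopped process $(Z(t\wedge T_a))_{t\ge0}$ is bounded by $2a$ and is therefore a uniformly integrable $\Pb$-martingale. Combining this with $\{T_a=\infty\}=\{\sup_t Z(t)\le a\}$ and dominated and monotone convergence, one obtains, for $A$ in the generating algebra $\bigcup_{s\ge0}\Fg_s$, that $\Qb(A\cap\{T_a=\infty\})=\Pb[Z(\infty)\ind_{A\cap\{T_a=\infty\}}]$; letting $a\to\infty$, so that $\{T_a=\infty\}\uparrow\{\sup_t Z(t)<\infty\}$, and using the identifications of the previous paragraph, gives $\Qb(A\cap\{Z(\infty)<\infty\})=\Pb[Z(\infty)\ind_A]$. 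Both sides are finite measures in $A$ agreeing on a generating algebra, so the monotone class theorem extends this to all $A\in\Fg_\infty$, and \eqref{lebdec} follows on adding $\Qb(A\cap\{Z(\infty)=\infty\})$ to both sides.

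The main obstacle is precisely this last step: passing from the relation $d\Qb/d\Pb|_{\Fg_t}=Z(t)$, which lives on each $\Fg_t$, to a statement on $\Fg_\infty$, where $Z$ need not be uniformly integrable. The truncation by $T_a$ handles this, and the only genuinely process-specific ingredient is the jump bound $\Delta Z(t)\le Z(t-)$; for a general non-negative martingale one would instead quote the more general statement of Lyons and Peres \cite{lyons_peres:probability_on_trees}. Everything else --- martingale convergence, the supermartingale property of $1/Z$ under $\Qb$, and uniqueness of the Lebesgue decomposition --- is routine.
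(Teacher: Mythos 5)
The paper offers no proof of this lemma at all --- it simply refers the reader to Lyons and Peres for ``a more general form'' --- so there is no in-paper argument to compare against; what you have written is, in effect, the standard proof behind that citation, and it is correct. Your key identity, the Lebesgue decomposition $\Qb(A)=\int_A Z(\infty)\,d\Pb+\Qb(A\cap\{Z(\infty)=\infty\})$ on $\Fg_\infty$, is exactly the statement from which both chains of equivalences are read off, and your derivation of the equivalences from it is clean (including the use of uniqueness of the decomposition to characterise $\Qb\perp\Pb$ by the vanishing of the absolutely continuous part). The truncation argument is sound: the jump bound $\Delta Z(t)\le Z(t-)$ does hold here, since between fission times each summand of $Z$ is continuous (a particle leaving the tube contributes continuously through the vanishing cosine) and a fission merely duplicates one non-negative summand, so the stopped process is indeed bounded by $2a$ and closes. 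Two cosmetic remarks. First, the jump bound can be avoided entirely by replacing $\{T_a>t\}$ with the event $\{\sup_{s\le t}Z(s)\le a\}\in\Fg_t$, on which $Z(t)\le a$; this sidesteps both the bound on $Z(T_a)$ and the mild measurability fuss about whether $\{T_a>t\}$ lies in $\Fg_t$, and it makes the argument work for an arbitrary non-negative c\`adl\`ag martingale, i.e.\ it recovers the ``more general form'' the paper alludes to. Second, since $Z$ is c\`adl\`ag the events $\{\sup_t Z(t)<\infty\}$ and $\{\limsup_t Z(t)<\infty\}$ coincide surely, so the supermartingale property of $1/Z$ under $\Qb$ is not needed for the decomposition itself --- it only serves to show that the $\limsup$ is an actual limit under $\Qb$, which is worth keeping but should be presented as such. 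Neither point affects correctness.
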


Another extremely useful spine tool -- also proved in \cite{hardy_harris:new_spine_formulation} -- is the \emph{many-to-one} theorem. A much more general version of this theorem is given in \cite{hardy_harris:new_spine_formulation}, but the following version will be enough for our purposes.

\begin{thm}[Many-to-One]
\label{many_to_one}
If $f(t)$ is $\Gg_t$-measurable for each $t\geq0$ with representation \emph{(\ref{fdecomp})}, then
\[\Pb[\sum_{u\in N(t)} f_u(t)] = e^{rt}\Pt[f(t)].\]
\end{thm}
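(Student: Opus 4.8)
My plan is to reformulate the identity so that both sides refer only to a single particle, and then establish it by conditioning on the first fission. First note that $f$ being $\Gg_t$-measurable means precisely that there is a fixed measurable functional $\phi_t$ on path space with $f(t)=\phi_t\big((\xi_s)_{0\le s\le t}\big)$; in the representation \emph{(\ref{fdecomp})} the node $u$ is the spine exactly when $(\xi_s)_{s\le t}$ is the ancestral path of $u$, so $f_u(t)=\phi_t\big((X_u(s))_{0\le s\le t}\big)$ with the \emph{same} $\phi_t$ for every $u\in N(t)$. Since the spine is a standard Brownian motion under $\Pt_x$, the quantity $\Pt_x[f(t)]$ is just the expectation of $\phi_t$ over a single Brownian path; thus the theorem says that the $\Pb_x$-expected sum over all lineages of $\phi_t$ equals $e^{rt}$ times its value along one lineage. (It suffices to treat the process started at the origin, but I keep a general root $x$, which is harmless.)

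To prove this I would condition on the first fission time $\sigma$ of the root -- exponential with rate $r$, and independent of the Brownian path the root runs until $\sigma$. On $\{\sigma>t\}$ there is a single particle and the inner sum equals $\phi_t$ of that Brownian path; on $\{\sigma\le t\}$ the branching property splits the sum into two independent pieces, each the same quantity for an independent BBM of duration $t-\sigma$ started from the root's position at time $\sigma$, with $\phi_t$ pre-composed with the path already run. Granting the identity at this next level and using the Markov property of Brownian motion, each such piece has conditional expectation $e^{r(t-\sigma)}\Pt_x\big[f(t)\,\big|\,(\xi_s)_{s\le\sigma}\big]$, so, writing $\rho(t,x)$ for the left-hand side,
\[\rho(t,x)=\Pt_x\big[\ind_{\{\sigma>t\}}f(t)\big]+2\,\Pt_x\big[\ind_{\{\sigma\le t\}}\,e^{r(t-\sigma)}f(t)\big].\]
A one-line computation -- using that under $\Pt_x$ the spine motion is independent of $\sigma$, so the two expectations equal $e^{-rt}\Pt_x[f(t)]$ and $\tfrac12(e^{rt}-e^{-rt})\Pt_x[f(t)]$ respectively (the latter from $\int_0^t re^{-rs}e^{r(t-s)}\,ds=\tfrac12(e^{rt}-e^{-rt})$) -- shows that $e^{rt}\Pt_x[f(t)]$ solves this recursion; and the recursion determines $\rho$ uniquely, since iterating it expresses $\rho(t,x)$ through the contributions of trees with more and more fissions, the tree has almost surely finitely many fissions by time $t$, and $\rho(t,x)$ is finite because $\Pb_x[|N(t)|]=e^{rt}<\infty$. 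Hence $\rho(t,x)=e^{rt}\Pt_x[f(t)]$, as required.

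There is no serious obstacle here -- the argument is bookkeeping -- but two points need care. The first is tracking how $\phi_t$ is restricted and pre-composed as one passes through a fission: the ancestral path of a descendant is the root's initial segment concatenated with a path inside the relevant subtree, and one must keep the conditioning and the Markov property straight through this. The second is the uniqueness step: the recursion does \emph{not} hold level by level in the number of fissions -- indeed the zero-fission contribution is $e^{-rt}\Pt_x[f(t)]$, not $e^{rt}\Pt_x[f(t)]$ -- so the factor $e^{rt}$ only emerges after summing all levels, and one must check that the partial sums converge. A slicker but less self-contained route to the same end is to note that, conditional on $\Fg_t$, the spine under $\Pt_x$ is the node $u\in N(t)$ with probability $2^{-|u|}$, whence $\Pt_x[f(t)\mid\Fg_t]=\sum_{u\in N(t)}2^{-|u|}f_u(t)$; since $\Pb_x=\Pt_x|_{\Fg_\infty}$ this reduces the theorem to $\Pb_x\big[\sum_{u\in N(t)}2^{-|u|}f_u(t)\big]=e^{-rt}\Pb_x\big[\sum_{u\in N(t)}f_u(t)\big]$, which the same recursion settles -- the case $\phi_t\equiv1$ being the familiar identity $\sum_{u\in N(t)}2^{-|u|}\equiv1$.
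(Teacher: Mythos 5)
Your argument is correct in substance, but note that the paper itself gives no proof of this theorem: it defers to Hardy and Harris \cite{hardy_harris:new_spine_formulation}, whose argument is precisely the ``slicker route'' you only sketch at the end. That proof runs: $\Pt_x(\xi_t=u\mid\Fg_t)=2^{-|u|}$ for $u\in N(t)$, so applying the projection to $2^{|\xi_t|}f(t)$ gives $\Pb_x\bigl[\sum_{u\in N(t)}f_u(t)\bigr]=\Pt_x\bigl[2^{|\xi_t|}f(t)\bigr]$; then, since under $\Pt_x$ the birth process $|\xi_t|$ along the spine is Poisson of rate $r$ and \emph{independent} of the spine's spatial path (and $f(t)$ is $\Gg_t$-measurable, hence a functional of that path alone), one gets $\Pt_x[2^{|\xi_t|}f(t)]=\Pt_x[2^{|\xi_t|}]\,\Pt_x[f(t)]=e^{rt}\Pt_x[f(t)]$ in one line. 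That independence is what produces the factor $e^{rt}$ instantly; your first-fission renewal decomposition produces it only after summing over all generations. Both routes are legitimate: the spine proof generalises immediately (time-dependent branching rates, general offspring distributions), while yours is more elementary and needs no change of perspective to the spine at all.

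The one genuinely loose point is your ``recursion''. The displayed identity is obtained by \emph{granting the theorem for the subtrees}, so its right-hand side no longer contains $\rho$; as written it is not an equation that $e^{rt}\Pt_x[f(t)]$ ``solves'' but a circular implication. You correctly diagnose this and propose iterating instead; carried out honestly, that iteration \emph{is} the generation-by-generation computation, and it is cleaner to do it directly: for a fixed label $u$ with $|u|=n$, the ancestral path $(X_u(s))_{s\le t}$ is a standard Brownian motion independent of the lifetimes, and $\Pb_x(u\in N(t))=e^{-rt}(rt)^n/n!$, so $\Pb_x\bigl[\sum_{|u|=n}f_u(t)\ind_{\{u\in N(t)\}}\bigr]=2^{n}e^{-rt}(rt)^n/n!\cdot\Pt_x[f(t)]$; summing over $n$ (by monotone convergence for $f\ge0$, then $f=f^+-f^-$) gives $e^{rt}\Pt_x[f(t)]$ with no uniqueness argument needed. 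I would not call this a gap, since you identify the issue and the repair, but the repaired version is the proof, and the fixed-point framing can be dropped.
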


We have one more lemma, a proof of which can be found in \cite{harris_roberts:extinction_letter}. Although this result is extremely simple -- and essential to our study -- we are not aware of its presence in the literature before \cite{harris_roberts:extinction_letter}.

\begin{lem}
\label{extinction_lem}
For any $t\in[0,\infty]$ (note that infinity is included here), we have
\[\Pb_x(Z(t)>0) = \Qb_x\left[\frac{Z(0)}{Z(t)}\right].\]
\end{lem}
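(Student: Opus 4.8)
\emph{Proof plan.} The plan is to treat the finite-time case and the case $t=\infty$ separately, in both cases exploiting the change-of-measure formula $d\Qb_x/d\Pb_x|_{\Fg_t}=Z(t)/Z(0)$ (we may assume $|x|<L$, so that $Z(0)$ is a strictly positive deterministic constant; otherwise $\hat N\equiv\emptyset$ and the statement is vacuous). First I would dispose of the case $t<\infty$. The Radon--Nikodym density above gives at once that $\Qb_x(Z(t)=0)=Z(0)^{-1}\Pb_x[Z(t)\ind_{\{Z(t)=0\}}]=0$ -- this is the analytic shadow of the fact that under $\Qb_x$ the spine is confined to the tube, so $\xi_t\in\hat N(t)$ and hence $Z(t)>0$. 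Thus $Z(0)/Z(t)$ is a genuine nonnegative $\Fg_t$-measurable random variable $\Qb_x$-a.s., and applying the change of measure to the function $g=\tfrac{Z(0)}{Z(t)}\ind_{\{Z(t)>0\}}$ yields
\[
\Qb_x\!\left[\frac{Z(0)}{Z(t)}\right]
  = \Qb_x\!\left[\frac{Z(0)}{Z(t)}\ind_{\{Z(t)>0\}}\right]
  = \Pb_x\!\left[\frac{Z(0)}{Z(t)}\ind_{\{Z(t)>0\}}\cdot\frac{Z(t)}{Z(0)}\right]
  = \Pb_x\!\left[\ind_{\{Z(t)>0\}}\right]
  = \Pb_x(Z(t)>0).
\]

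For $t=\infty$ one cannot simply let $t\to\infty$ in the identity just obtained: the events $\{Z(t)>0\}$ decrease to $\{Z(t)>0\ \forall t\}=\{\Upsilon=\infty\}$, which may strictly contain $\{Z(\infty)>0\}$, so the limit would land on the wrong event (and in any case $\Qb_x$ need not be absolutely continuous with respect to $\Pb_x$ on $\Fg_\infty$). Instead I would appeal directly to the Lebesgue decomposition of $\Qb_x$ with respect to $\Pb_x$ on $\Fg_\infty$ associated with the nonnegative $\Pb_x$-martingale $Z(t)/Z(0)$ -- the general fact underlying Lemma \ref{stdmeas}; see \cite{lyons_peres:probability_on_trees}. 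Since $Z$ is a nonnegative martingale it converges $\Pb_x$-a.s.\ to $Z(\infty)=\limsup_t Z(t)$ with $Z(\infty)<\infty$ $\Pb_x$-a.s., and for every $A\in\Fg_\infty$
\[
\Qb_x(A)=\int_A \frac{Z(\infty)}{Z(0)}\,d\Pb_x \;+\; \Qb_x\bigl(A\cap\{Z(\infty)=\infty\}\bigr),
\]
the first term being the absolutely continuous part and the second -- which is carried by the $\Pb_x$-null set $\{Z(\infty)=\infty\}$ -- the singular part. Taking $A=\{Z(\infty)=0\}$ shows $\Qb_x(Z(\infty)=0)=0$, so that $Z(0)/Z(\infty)$ is again $\Qb_x$-a.s.\ a bona fide nonnegative random variable, vanishing on $\{Z(\infty)=\infty\}$. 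Extending the decomposition from sets to nonnegative functions and applying it to $h=\tfrac{Z(0)}{Z(\infty)}\ind_{\{0<Z(\infty)<\infty\}}$ (whose support is disjoint from $\{Z(\infty)=\infty\}$, killing the singular contribution), then using $\Pb_x(Z(\infty)=\infty)=0$, gives
\[
\Qb_x\!\left[\frac{Z(0)}{Z(\infty)}\right]
  = \Qb_x\!\left[\frac{Z(0)}{Z(\infty)}\ind_{\{0<Z(\infty)<\infty\}}\right]
  = \int_{\{0<Z(\infty)<\infty\}} \frac{Z(0)}{Z(\infty)}\cdot\frac{Z(\infty)}{Z(0)}\,d\Pb_x
  = \Pb_x(Z(\infty)>0).
\]

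The only delicate point -- and the step I expect to require the most care -- is the $t=\infty$ case, specifically the precise form of the Lebesgue decomposition: one needs not merely that $\Qb_x$ splits into absolutely continuous and singular parts but that the singular part sits exactly on $\{Z(\infty)=\infty\}$. It is this that makes both $\Qb_x(Z(\infty)=0)=0$ and the vanishing of the singular contribution work, and it is the reason the definition $Z(\infty)=\limsup_t Z(t)$ (rather than an ordinary limit, which under $\Qb_x$ need not exist) is the correct one. Everything else is routine bookkeeping with the conventions $\infty\cdot 0=0$ and $Z(0)/\infty=0$.
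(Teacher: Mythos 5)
Your argument is correct, and it is essentially the proof given in the reference the paper cites for this lemma (\cite{harris_roberts:extinction_letter}): the finite-$t$ case by direct application of the Radon--Nikodym density on $\Fg_t$, and the $t=\infty$ case via the Lebesgue decomposition of $\Qb_x$ with singular part carried by the $\Pb_x$-null set $\{Z(\infty)=\infty\}$. You also correctly identify the one genuinely delicate point, namely that one cannot pass to the limit in the finite-$t$ identity because $\{Z(t)>0\}\downarrow\{\Upsilon=\infty\}$ rather than $\{Z(\infty)>0\}$.
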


\section{Almost sure growth along paths}

\subsection{Controlling the measure change}
Before applying the tools that we have developed, we need the following short lemma to keep the Girsanov part of our change of measure under control.

\begin{lem}
\label{int_lem}
For any $u\in\hat{N}(t)$, almost surely under both $\Pt_x$ and $\Qt_x$ we have
\[\left|\int_0^t f'(s) dX_u(s) - \int_0^t f'(s)^2 ds\right| \leq 2L\int_0^t |f''(s)| ds + 2L|f'(0)|.\]
\end{lem}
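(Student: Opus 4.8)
The plan is to change variables to the displacement of the particle from the path and then apply ordinary integration by parts, exploiting that $f$ is twice continuously differentiable so that $f'$ is continuous and of bounded variation on any compact interval.

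First I would write $Y_u(s) := X_u(s) - f(s)$ for the displacement of $u$ (along its ancestral line) from $f$. If $\hat{N}(t) = \emptyset$ the claim is vacuous, so suppose $u \in \hat{N}(t)$; then by definition $|Y_u(s)| < L$ for every $s \le t$. Since $dX_u(s) = dY_u(s) + f'(s)\,ds$, we immediately obtain the identity
\[
\int_0^t f'(s)\,dX_u(s) - \int_0^t f'(s)^2\,ds = \int_0^t f'(s)\,dY_u(s),
\]
so it suffices to bound the right-hand side.

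Next I would integrate by parts. Under $\Pt_x$ the map $s \mapsto X_u(s)$ is, along the line of descent of $u$, a continuous Brownian path, and under $\Qt_x$ it is a continuous semimartingale by the description of $\Qt_x$ given above; in either case $Y_u$ is a continuous semimartingale, and since $f'$ is deterministic and of finite variation on $[0,t]$ the It\^o integration-by-parts formula carries no quadratic-covariation correction term. Equivalently, one may read the identity path-wise as Riemann--Stieltjes integration by parts with the $C^1$ integrand $f'$. Either way,
\[
\int_0^t f'(s)\,dY_u(s) = f'(t)Y_u(t) - f'(0)Y_u(0) - \int_0^t f''(s) Y_u(s)\,ds .
\]

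Finally I would bound the three terms. Since $|Y_u(s)| < L$ for all $s \le t$ (in particular at $s=0$ and $s=t$), the last integral is at most $L\int_0^t |f''(s)|\,ds$ in absolute value and $|f'(0)Y_u(0)| \le L|f'(0)|$. For the first term, $|f'(t)| \le |f'(0)| + \int_0^t |f''(s)|\,ds$, whence $|f'(t)Y_u(t)| \le L|f'(0)| + L\int_0^t |f''(s)|\,ds$. Summing these estimates yields the asserted bound $2L\int_0^t|f''(s)|\,ds + 2L|f'(0)|$, valid simultaneously under $\Pt_x$ and $\Qt_x$. I do not expect any serious obstacle; the only point needing a little care is that $u$ is a \emph{random} line of descent, which is why I prefer to phrase the integration by parts path-wise, where it holds for every realisation of the continuous path $X_u$ on $[0,t]$ and agrees almost surely with the It\^o integral.
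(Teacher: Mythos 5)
Your proof is correct and is essentially the paper's own argument: the authors likewise reduce the left-hand side to $f'(t)(X_u(t)-f(t)) - f'(0)(X_u(0)-f(0)) - \int_0^t f''(s)(X_u(s)-f(s))\,ds$ (via an It\^o integration by parts for $\int f'\,dX_u$ and an ordinary one for $\int f'^2\,ds$, rather than your single integration by parts for $Y_u = X_u - f$, but the resulting identity is the same) and then bound the three terms exactly as you do using $|X_u(s)-f(s)|<L$ and $|f'(t)|\le |f'(0)|+\int_0^t|f''(s)|\,ds$. Your remark about reading the integration by parts path-wise is a harmless refinement; no gap.
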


\begin{proof}
From the integration by parts formula for It\^o calculus, we know that
\[f'(t)X_u(t) = f'(0)X_u(0) + \int_0^t f''(s)X_u(s) ds + \int_0^t f'(s)dX_u(s).\]
From ordinary integration by parts,
\[\int_0^t f'(s)^2 ds = f'(t)f(t) - f'(0)f(0) - \int_0^t f(s)f''(s) ds.\]
We also note that, if $u\in\hat N(t)$ then $|X_u(s) - f(s)|< L$ for all $s\leq t$. Thus
\begin{eqnarray*}
&&\left|\int_0^t f'(s) dX_u(s) - \int_0^t f'(s)^2 ds\right|\\
&&\leq |f'(t)(X_u(t)-f(t)) - f'(0)(X_u(0)-f(0)) - \int_0^t f''(s)(X_u(s)-f(s))ds|\\
&&\leq 2L\int_0^t |f''(s)| ds + 2L|f'(0)|.\qedhere
\end{eqnarray*}
\end{proof}

The above estimate motivates the following definition:

\begin{defn}
For $p\in[0,1)$ set
\begin{equation*}
T(p) = \inf\{t : \textstyle{\int_0^s (r - \frac{\pi^2}{8L^2} - \frac{1}{2}f'(u)^2 - 2L|f''(u)|)du - 2L|f'(0)|} \geq p\tilde S s \hs \forall s\geq t\}.
\end{equation*}
We note that $T(p)$ is deterministic and finite.
\end{defn}

We are now ready to give our first real result, which tells us when our measure change is well-behaved.

\begin{prop}\label{uiprop}
Recall that $Z(\infty):= \limsup_{t\to\infty}Z(t)$. If $\tilde S < 0$ or $\tilde S = -\infty$, then the process almost surely becomes extinct in finite time (and hence we have $Z(\infty)=0$). Alternatively, if $\tilde S > 0$ then $\Pb[Z(\infty)]=1$.
\end{prop}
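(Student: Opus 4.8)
I would split the proof according to the sign of $\tilde S$, and in both halves I would use the elementary bound $q(t):=\Pb_0(|B_s|<L\ \forall s\leq t)\leq \tfrac{4}{\pi}e^{-\pi^2 t/(8L^2)}$ for the survival probability of a standard Brownian motion in the fixed interval $(-L,L)$, which is immediate from the classical alternating series for this probability.

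\emph{The subcritical case ($\tilde S<0$ or $\tilde S=-\infty$).} Since any $u\in\hat N(t)$ has an ancestor in $\hat N(s)$ for each $s\leq t$, the event $\{\Upsilon>t\}=\{\hat N(t)\neq\emptyset\}=\{Z(t)>0\}$ is decreasing in $t$, so it is enough to find $t_n\to\infty$ with $\Pb_0(\Upsilon>t_n)\to0$. By Lemma~\ref{extinction_lem} (with $Z(0)=1$), $\Pb_0(\Upsilon>t)=\Pb_0(Z(t)>0)=\Qb_0[1/Z(t)]=\Qt_0[1/Z(t)]$. Under $\Qt$ the spine never leaves the tube and already contributes $e^{-rt}\zeta(t)>0$ to $Z(t)$, so $1/Z(t)\leq e^{rt}/\zeta(t)$; writing $\zeta$ out explicitly and using Lemma~\ref{int_lem} on the spine to control the Girsanov exponent gives
\[\frac{1}{Z(t)}\ \leq\ \exp\!\Big(\big(r-\tfrac{\pi^2}{8L^2}\big)t-\tfrac12\!\int_0^t\! f'(s)^2ds+2L\!\int_0^t\!|f''(s)|ds+2L|f'(0)|\Big)\,\sec\!\Big(\tfrac{\pi}{2L}(\xi_t-f(t))\Big).\]
Taking $\Qt_0$-expectations, the only remaining quantity is $\Qt_0\big[\sec\big(\tfrac{\pi}{2L}(\xi_t-f(t))\big)\big]$; since $\Qt$ restricted to $\Gg_t$ is obtained from $\Pt$ by the single-particle martingale $\zeta$, this equals $e^{\pi^2 t/(8L^2)}\Pt_0\big[e^{\int_0^t f'd\xi-\frac12\int_0^t (f')^2}\ind_{\{\text{spine in tube up to }t\}}\big]$, and a further Girsanov step identifies the last expectation with $q(t)$, so that $\Qt_0[\sec(\cdots)]=e^{\pi^2 t/(8L^2)}q(t)\leq\tfrac4\pi$. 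Combining everything, $\Pb_0(\Upsilon>t)\leq \tfrac4\pi e^{2L|f'(0)|}\exp\big((r-\tfrac{\pi^2}{8L^2})t-\tfrac12\int_0^t f'(s)^2ds+2L\int_0^t|f''(s)|ds\big)$. Choosing $t_n\to\infty$ to realise the $\limsup$ defining $S$ and using the usual condition $\tfrac1t\int_0^t|f''(s)|ds\to0$, the exponent divided by $t_n$ tends to $\tilde S<0$ (or to $-\infty$), so $\Pb_0(\Upsilon>t_n)\to0$; by the monotonicity noted above, $\Pb_0(\Upsilon=\infty)=0$, and since $Z$ vanishes after time $\Upsilon$ this forces $Z(\infty)=0$.

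\emph{The supercritical case ($\tilde S>0$).} Here $Z$ is a non-negative $\Pb$-martingale with $Z(0)=1$, so by Lemma~\ref{stdmeas} it suffices to show $Z(\infty)<\infty$ $\Qb$-a.s. I would apply the spine decomposition $\Qt_x[Z(t)\mid\Gg_\infty]=\int_0^t 2re^{-rs}\zeta(s)\,ds+e^{-rt}\zeta(t)$, using that under $\Qt$ the spine stays in the tube; bounding the cosine in $\zeta$ by $1$ and using Lemma~\ref{int_lem} on the spine yields the \emph{deterministic} bound $e^{-rs}\zeta(s)\leq h(s):=\exp\big(-\int_0^s(r-\tfrac{\pi^2}{8L^2}-\tfrac12 f'(u)^2-2L|f''(u)|)du+2L|f'(0)|\big)$. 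By the very definition of $T(1/2)$ one has $h(s)\leq e^{-\tilde S s/2}$ for $s\geq T(1/2)$, while $h$ is bounded on $[0,T(1/2)]$; hence $\int_0^\infty 2r\,h(s)\,ds<\infty$ and $\Qt_x[Z(t)\mid\Gg_\infty]$ is bounded, uniformly in $t$, by a finite constant, $\Qt$-a.s. Conditional Fatou then gives $\liminf_t Z(t)<\infty$ $\Qt$-a.s., and combining with the standard fact that $1/Z$ is a non-negative $\Qb$-supermartingale (so $Z(t)$ converges $\Qb$-a.s.\ to a limit in $(0,\infty]$, necessarily $Z(\infty)$) we get $Z(\infty)<\infty$ $\Qb$-a.s. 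Lemma~\ref{stdmeas} then gives $\Qb=Z(\infty)\Pb$ and hence $\Pb[Z(\infty)]=1$.

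\emph{The main obstacle.} The supercritical half is essentially the standard spine/martingale argument once Lemma~\ref{int_lem} and the constant $\tfrac{\pi^2}{8L^2}$ are in hand. The work lies in the tube-survival estimate in the subcritical half: a naive use of the single-particle martingale $\zeta$ stumbles on the fact that its factor $\cos\big(\tfrac{\pi}{2L}(\xi_t-f(t))\big)$ degenerates near the walls of the tube, so lower-bounding $Z$ by $\zeta$ loses control of the reciprocal. Passing through the Girsanov transform cures this by reducing everything to the survival probability $q(t)$ of Brownian motion in a \emph{fixed} interval, for which the sharp leading constant $\tfrac4\pi e^{-\pi^2 t/(8L^2)}$ is classical; one then has to remember to work along the subsequence realising $S=\limsup$ and to invoke the monotonicity of $t\mapsto\Pb_0(\Upsilon>t)$, rather than hoping for a bound uniform in $t$.
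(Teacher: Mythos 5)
Your proof is correct. The supercritical half is essentially the paper's argument verbatim (spine decomposition, Lemma \ref{int_lem}, the deterministic bound via $T(p)$, conditional Fatou, Lemma \ref{stdmeas}), so there is nothing to add there. The subcritical half, however, takes a genuinely different route. The paper never confronts the degenerating cosine at the walls: it widens the tube to radius $L'>L$ (possible because $\tilde S<0$ leaves room to pick $L'$ and a finite $S'\leq S$ with $r<S'/2+\pi^2/8L'^2$), so that any particle of $\hat N^{f,L}(t)$ contributes a term to $Z^{f,L'}(t)$ whose cosine factor is bounded below by $\eta=\cos(\pi L/2L')$, and then bounds $\Qb^{f,L'}[\ind_{\{\hat N^{f,L}(t)\neq\emptyset\}}/Z^{f,L'}(t)]$ by the reciprocal of that single term. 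You instead stay in the original tube, lower-bound $Z(t)$ by the spine's own term $e^{-rt}\zeta(t)$ under $\Qt$, and dispose of the resulting secant exactly, by unwinding the $\zeta$-change of measure and a Girsanov shift to identify $\Qt_0[\sec(\tfrac{\pi}{2L}(\xi_t-f(t)))]$ with $e^{\pi^2 t/8L^2}q(t)\leq 4/\pi$, where $q(t)$ is the classical confinement probability of Brownian motion in $(-L,L)$. Your version buys an explicit, non-asymptotic bound on $\Pb(\Upsilon>t)$ with the sharp exponential rate and no auxiliary parameters $L',S',\eta$, at the cost of importing the eigenfunction expansion for $q(t)$ (the alternating-series bound you cite is indeed valid, as the terms decrease in $n$); the paper's version is self-contained within the martingale framework but only yields decay at the slightly worse rate $\tilde S'$. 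Both arguments need, and you correctly make explicit, the monotonicity of $t\mapsto\Pb(\hat N(t)\neq\emptyset)$ to pass from the subsequence $t_n$ realising $S$ to the full limit.
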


\begin{proof}
Suppose first that $\tilde S \in [-\infty,0)$. Then $r < \frac{S}{2} + \frac{\pi^2}{8L^2}$ so we may choose $L' > L$ and finite $S'\leq S$ such that
\[r < \frac{S'}{2} + \frac{\pi^2}{8L'^2}.\]
Let $\eta = \cos(\pi L/2L')$ and $\tilde S' = r - \pi^2/8L'^2 - S'/2$.
Since $L'>L$, we have
\[\hat{N}^{f,L}(t)\neq\emptyset \hsl \Rightarrow \hsl Z^{f,L'}(t)>0.\]
Recall the extinction time $\Upsilon:= \inf\{t\geq0: \hat{N}(t)=\emptyset\}$. Then
\begin{eqnarray*}
\Pb(\Upsilon = \infty) &=& \lim_{t\to\infty}\Pb(\hat{N}^{f,L}(t)\neq\emptyset)\\
&=& \lim_{t\to\infty}\Pb\left[\frac{Z^{f,L'}(t)}{Z^{f,L'}(t)}\ind_{\{\hat{N}^{f,L}(t)\neq\emptyset\}}\right]\\
&=& \lim_{t\to\infty}\Qb^{f,L'}\left[\frac{1}{Z^{f,L'}(t)}\ind_{\{\hat{N}^{f,L}(t)\neq\emptyset\}}\right]\\
&\leq& \lim_{t\to\infty}\Qb^{f,L'}\left[\frac{\ind_{\{\hat{N}^{f,L}(t)\neq\emptyset\}}}{\sum_{u\in\hat{N}^{f,L}(t)} \eta e^{(\frac{\pi^2}{8L'^2} - r)t + \int_0^t f'(s)dX_u(s) - \frac{1}{2}\int_0^t f'(s)^2 ds} }\right].
\end{eqnarray*}
If $\hat{N}^{f,L}(t)\neq\emptyset$ then there is at least one particle in $\hat{N}^{f,L}(t)$: we may apply Lemma \ref{int_lem} to its term in the denominator above to get
\begin{eqnarray*}
\Pb(\Upsilon = \infty) &\leq& \lim_{t\to\infty}\frac{1}{\eta}\Qb^{f,L'}\left[\frac{1}{e^{(\frac{\pi^2}{8L'^2}-r)t + \frac{1}{2}\int_0^t f'(s)^2 ds - 2L\int_0^t |f''(s)|ds - 2L|f'(0)|}}\right]\\
&\leq& \lim_{t\to\infty}\frac{1}{\eta}\frac{1}{ e^{-\tilde S' t + o(t)} } = 0,\\
\end{eqnarray*}
which proves our first claim.

\vspace{2mm}

Now suppose that $\tilde S > 0$. We recall the spine decomposition:
\[\Qt[Z(t)|\Gg_\infty] = \int_0^t 2r e^{-rs}\zeta(s) ds + e^{-rt}\zeta(t).\]
Since, under $\Qt$, the spine is almost surely in $\hat N(t)$ for each $t\geq 0$, we may use Lemma \ref{int_lem} to bound both terms: for any $p\in(0,1)$ and $t\geq T(p)$,
\begin{eqnarray*}
e^{-rt}\zeta(t) &=& e^{(\frac{\pi^2}{8L^2} - r)t + \int_0^t f'(s) d\xi_s - \frac{1}{2}\int_0^t f'(s)^2 ds}\cos\left(\frac{\pi}{2L}(\xi_t-f(t))\right)\\
&\leq& e^{-\int_0^t (r - \frac{\pi^2}{8L^2} - \frac{1}{2} f'(s)^2 ds - 2L|f''(s)|) ds + |f'(0)|} \hs \leq \hs e^{-p\tilde S t}
\end{eqnarray*}
so that
\[\Qt[Z(t)|\Gg_\infty] \leq \int_0^{T(p)}2r e^{-rs}\zeta(s) ds + \int_{T(p)}^t 2r e^{-p\tilde S s} ds + e^{-p\tilde S t},\]
and thus $\liminf_{t\to\infty} \Qt[Z(t)|\Gg_\infty] < \infty$ $\Qt$-almost surely. It is easily checked that $1/Z$ is a positive supermartingale under $\Qt$, and hence $Z(t)$ converges $\Qt$-almost surely to some (possibly infinite) limit. Thus, applying Fatou's lemma, we get
\[\Qt[Z(\infty)|\Gg_\infty]\leq \liminf_{t\to\infty} \Qt[Z(t)|\Gg_\infty] <\infty.\]
We deduce that $Z(\infty)<\infty$ $\Qt$-almost surely, and Lemma \ref{stdmeas} then gives that $\Pb[Z(\infty)]=1$.
\end{proof}

\subsection{Almost sure growth}

The two propositions in this section contain the meat of our results. Proposition \ref{liminf_prop} gives a lower bound on the number of particles in $\hat{N}(t)$ for large $t$, and Proposition \ref{limsup_prop} an upper bound. The former holds only on the event that $Z$ has a positive limit; as mentioned in the introduction, this set coincides (up to a null event) with the event that no particle manages to follow within $L$ of $f$, although we will not prove this fact until later. The proofs of our two propositions are very simple, but we stress again that this is due to the careful choice of martingale.

\begin{prop}\label{liminf_prop}
Let $\Omega^{\star}$ be the set on which $Z$ has a strictly positive limit,
\[\Omega^{\star} := \left\{\liminf_{t\to\infty} Z(t)>0 \right\}.\]
Then almost surely on $\Omega^{\star}$ we have
\[\liminf_{t\to\infty} \frac{1}{t}\log |\hat N^{f,L}(t)| \geq r - \frac{\pi^2}{8L^2} - \limsup_{t\to\infty}\frac{1}{2t}\int_0^t f'(s)^2 ds\]
and
\[\limsup_{t\to\infty} \frac{1}{t}\log |\hat N^{f,L}(t)| \geq r - \frac{\pi^2}{8L^2} - \liminf_{t\to\infty}\frac{1}{2t}\int_0^t f'(s)^2 ds.\]
\end{prop}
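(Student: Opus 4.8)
The plan is to obtain a lower bound on $|\hat N(t)|$ directly from the definition of $Z(t)$ as a sum over $\hat N(t)$, using the bound on the Girsanov exponent from Lemma~\ref{int_lem} together with the fact that, on $\Omega^\star$, $Z(t)$ stays bounded below by a positive constant along a suitable sequence of times. First I would write
\[
Z(t) = \sum_{u\in\hat N(t)} e^{(\frac{\pi^2}{8L^2}-r)t}\cos\Bigl(\tfrac{\pi}{2L}(X_u(t)-f(t))\Bigr) e^{\int_0^t f'(s)dX_u(s)-\frac12\int_0^t f'(s)^2 ds},
\]
and observe that each cosine term is at most $1$, while by Lemma~\ref{int_lem} the Girsanov exponent satisfies
\[
\int_0^t f'(s)dX_u(s)-\tfrac12\int_0^t f'(s)^2 ds \le \tfrac12\int_0^t f'(s)^2 ds + 2L\int_0^t|f''(s)|ds + 2L|f'(0)|
\]
for every $u\in\hat N(t)$. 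Hence
\[
Z(t) \le |\hat N(t)|\, e^{(\frac{\pi^2}{8L^2}-r)t}\, e^{\frac12\int_0^t f'(s)^2 ds + 2L\int_0^t|f''(s)|ds + 2L|f'(0)|},
\]
which rearranges to a lower bound
\[
|\hat N(t)| \ge Z(t)\, e^{(r-\frac{\pi^2}{8L^2})t}\, e^{-\frac12\int_0^t f'(s)^2 ds - 2L\int_0^t|f''(s)|ds - 2L|f'(0)|}.
\]

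Next I would take $\frac1t\log$ of both sides. On $\Omega^\star$ we have $\liminf_{t\to\infty} Z(t) > 0$, so $\frac1t\log Z(t)\to 0$ almost surely on $\Omega^\star$; and by the usual condition (3) on $f$, $\frac1t\int_0^t|f''(s)|ds\to 0$, so that whole error term together with the constant $2L|f'(0)|$ contributes nothing in the limit. This leaves
\[
\frac1t\log|\hat N(t)| \ge r - \frac{\pi^2}{8L^2} - \frac{1}{2t}\int_0^t f'(s)^2 ds + o(1)
\]
almost surely on $\Omega^\star$. Taking $\liminf_{t\to\infty}$ of the right-hand side gives $r - \frac{\pi^2}{8L^2} - \limsup_{t\to\infty}\frac{1}{2t}\int_0^t f'(s)^2 ds$, establishing the first inequality; taking $\limsup_{t\to\infty}$ gives $r - \frac{\pi^2}{8L^2} - \liminf_{t\to\infty}\frac{1}{2t}\int_0^t f'(s)^2 ds$, establishing the second. (Here I use $\liminf(a_t + b_t) \ge \liminf a_t + \liminf b_t$ and $\limsup(a_t+b_t)\ge\limsup a_t + \liminf b_t$ applied with $a_t = \frac1t\log|\hat N(t)|$ and $b_t$ the negative integral term plus $o(1)$.)

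I do not expect a serious obstacle here — the proposition is essentially immediate once the Girsanov bound from Lemma~\ref{int_lem} is in hand, which is precisely the point the authors emphasise about the martingale being well chosen. The only mild subtlety is making sure that the almost-sure convergence $\frac1t\log Z(t)\to0$ is valid on all of $\Omega^\star$ (not just where the limit exists): this follows because $Z$ converges $\Qt$-a.s.\ and hence, by Lemma~\ref{stdmeas} and Proposition~\ref{uiprop} when $\tilde S>0$, $Z$ converges $\Pb$-a.s.\ to a finite limit, so on $\Omega^\star$ the limit is both finite and strictly positive, forcing $\frac1t\log Z(t)\to0$. One should also note that the argument only uses an \emph{upper} bound on $Z$ in terms of $|\hat N(t)|$ — it does not need the spine decomposition at all — which is why it holds on the whole of $\Omega^\star$ without any integrability input; the spine decomposition will be needed instead for the matching upper bound in Proposition~\ref{limsup_prop}.
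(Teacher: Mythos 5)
Your proposal is correct and follows essentially the same route as the paper: bound $Z(t)$ above by $|\hat N(t)|$ times the deterministic exponential factor via Lemma \ref{int_lem}, rearrange, and use $\liminf_{t\to\infty}Z(t)>0$ on $\Omega^\star$ to kill the $\frac{1}{t}\log Z(t)$ term. The only cosmetic difference is that for the second inequality the paper passes to a deterministic subsequence $t_n$ realising the $\liminf$ of $\frac{1}{t}\int_0^t f'(s)^2\,ds$, whereas you take the $\limsup$ of the pointwise inequality directly; both are valid.
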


\begin{proof}
For any $t\geq 0$, by Lemma \ref{int_lem}, almost surely under $\Pb$,
\begin{eqnarray*}
Z(t) &=& \sum_{u\in \hat N(t)} e^{(\pi^2 / 8L^2 - r)t} \cos\left(\frac{\pi}{2L}(X_u(t)-f(t))\right)e^{\int_0^t f'(s) dX_u(s) - \frac{1}{2}\int_0^t f'(s)^2 ds}\\
&\leq& |\hat N(t)| e^{(\pi^2/8L^2-r)t + \frac{1}{2}\int_0^t f'(s)^2 ds + 2L\int_0^t |f''(s)| ds + 2L|f'(0)|}.
\end{eqnarray*}
Hence
\[\frac{1}{t}\log |\hat N(t)| \geq \frac{1}{t}\log Z(t) + r - \frac{\pi^2}{8L^2} - \frac{1}{2t} \int_0^t f'(s)^2 ds - \frac{2L}{t}\int_0^t |f''(s)| ds - \frac{2L}{t}|f'(0)|.\]
Now, on $\Omega^{\star}$ we have $\liminf_{t\to\infty} Z(t) >0$ and thus
\[\liminf_{t\to\infty} \frac{1}{t}\log Z(t) \geq 0.\]
It is then a simple exercise, using that $|\hat N(t)|$ and $Z(t)$ are c\`adl\`ag functions of $t$, to show that
\[\liminf_{t\to\infty}\frac{1}{t}\log |\hat N(t)| \geq r - \frac{\pi^2}{8L^2} - \limsup_{t\to\infty}\frac{1}{2t}\int_0^t f'(s)^2 ds.\]
On the other hand, taking (deterministic) times $t_n\to\infty$ such that
\[\lim_{n\to\infty}\frac{1}{t_n}\int_0^{t_n} f'(s)^2 ds = \liminf_{t\to\infty}\frac{1}{t}\int_0^t f'(s)^2 ds\]
and running the same argument as above along the sequence $t_n$, we get
\begin{eqnarray*}
\limsup_{t\to\infty}\frac{1}{t}\log |\hat N(t)| &\geq& \liminf_{n\to\infty}\frac{1}{t_n}\log |\hat N(t_n)|\\
&\geq& r - \frac{\pi^2}{8L^2} - \limsup_{n\to\infty}\frac{1}{2t_n}\int_0^{t_n} f'(s)^2 ds\\
&=& r - \frac{\pi^2}{8L^2} - \liminf_{t\to\infty}\frac{1}{2t}\int_0^t f'(s)^2 ds. \qedhere
\end{eqnarray*}
\end{proof}

\begin{rmk}
Recall that under $\Pb$, $Z$ is a positive martingale so $\liminf_{t\to\infty}Z(t) = Z(\infty)$ $\Pb$-almost surely. If $\tilde S > 0$, then $\Pb[Z(\infty)]=1$, so in this case $\Omega^{\star}$ occurs with strictly positive probability.
\end{rmk}

\begin{prop}\label{limsup_prop}
For any $S\in[0,\infty]$ and $L>0$, $\Pb$-almost surely we have
\[\limsup_{t\to\infty} \frac{1}{t}\log |\hat N^{f,L}(t)| \leq r - \frac{\pi^2}{8L^2} - \liminf_{t\to\infty}\frac{1}{2t}\int_0^t f'(s)^2 ds\]
and
\[\liminf_{t\to\infty} \frac{1}{t}\log |\hat N^{f,L}(t)| \leq r - \frac{\pi^2}{8L^2} - \limsup_{t\to\infty}\frac{1}{2t}\int_0^t f'(s)^2 ds.\]
\end{prop}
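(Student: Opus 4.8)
The plan is to obtain the upper bound by a first-moment argument: we estimate $\Pb[|\hat N^{f,L}(t)|]$ using the Many-to-One theorem, and then convert the expectation bound into an almost-sure bound along a deterministic subsequence via a Borel--Cantelli argument and an interpolation step exploiting c\`adl\`ag monotone-type control between integer times. A slight subtlety is that enlarging $L$ only helps us here, so — as in the proof of Proposition~\ref{uiprop} — I would first reduce to bounding $Z^{f,L'}(t)$ for some $L'>L$ from below by a multiple of $|\hat N^{f,L}(t)|$; more directly, since $\cos(\tfrac{\pi}{2L}(X_u(t)-f(t)))$ can be small even when $u\in\hat N^{f,L}(t)$, it is cleaner to work with a slightly fatter tube: for $L'>L$ set $\eta=\cos(\pi L/2L')>0$, so that for every $u\in\hat N^{f,L}(t)$ we have $\cos(\tfrac{\pi}{2L'}(X_u(t)-f(t)))\geq\eta$, whence
\[
|\hat N^{f,L}(t)|\;\leq\;\frac{1}{\eta}\,e^{(r-\pi^2/8L'^2)t}\,e^{-\int_0^t f'(s)\,dX_u(s)+\frac12\int_0^t f'(s)^2\,ds}\,Z^{f,L'}(t)
\]
is \emph{not} quite what we want since $Z^{f,L'}(t)$ is a sum; instead I would directly bound $\Pb[|\hat N^{f,L}(t)|]$.

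The key computation: write $|\hat N^{f,L}(t)|=\sum_{u\in N(t)}\ind_{\{|X_u(s)-f(s)|<L\,\forall s\leq t\}}$, which is a sum of $\Fg_t$-measurable functionals of the individual paths, so by Theorem~\ref{many_to_one},
\[
\Pb\bigl[|\hat N^{f,L}(t)|\bigr]=e^{rt}\,\Pt\bigl(|\xi_s-f(s)|<L\ \forall s\leq t\bigr).
\]
Under $\Pt$ the spine $\xi$ is a standard Brownian motion, and $\Pt(|\xi_s-f(s)|<L\ \forall s\leq t)$ is exactly the probability that a Brownian motion stays in a moving tube of width $2L$ about $f$. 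Using the single-particle martingale $\zeta$ — which rearranges to
\[
\Pt\bigl(|\xi_s-f(s)|<L\ \forall s\leq t\bigr)\;\leq\;\frac{1}{\cos^{-1}\text{-type bound}}\cdots
\]
— more precisely, since $\zeta(t)\leq e^{\pi^2 t/8L^2}e^{\int_0^t f'(s)\,d\xi_s-\frac12\int_0^t f'(s)^2\,ds}\ind_{\{|\xi_s-f(s)|<L\,\forall s\leq t\}}$ and $\Pt[\zeta(t)]=\zeta(0)=\cos(0)=1$, one gets on the tube event, after applying the path estimate of Lemma~\ref{int_lem} to $\xi$ (valid $\Pt$-a.s. on the tube event), that $\ind_{\{|\xi_s-f(s)|<L\,\forall s\leq t\}}e^{-\pi^2 t/8L^2}e^{\frac12\int_0^t f'(s)^2\,ds-2L\int_0^t|f''|-2L|f'(0)|}\leq\zeta(t)$, so taking $\Pt$-expectations,
\[
\Pt\bigl(|\xi_s-f(s)|<L\ \forall s\leq t\bigr)\;\leq\;e^{\pi^2 t/8L^2}\,e^{-\frac12\int_0^t f'(s)^2\,ds}\,e^{2L\int_0^t|f''(s)|\,ds+2L|f'(0)|}.
\]
Combining,
\[
\Pb\bigl[|\hat N^{f,L}(t)|\bigr]\;\leq\;e^{(r-\pi^2/8L^2)t}\,e^{-\frac12\int_0^t f'(s)^2\,ds}\,e^{2L\int_0^t|f''(s)|\,ds+2L|f'(0)|}.
\]

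From here the argument runs as in Proposition~\ref{liminf_prop} but in reverse. Fix $\varepsilon>0$ and let $a_n=\Pb[|\hat N^{f,L}(n)|]e^{\varepsilon n}$-type quantities; since $\tfrac1t\int_0^t|f''|\to0$ by the usual conditions (3), Markov's inequality gives $\Pb(|\hat N^{f,L}(n)|\geq e^{(r-\pi^2/8L^2-\frac12\cdot\frac1n\int_0^n f'(s)^2\,ds+\varepsilon)n})\leq e^{-\varepsilon n/2}$ for all large $n$, which is summable, so Borel--Cantelli gives $\frac1n\log|\hat N^{f,L}(n)|\leq r-\pi^2/8L^2-\frac{1}{2n}\int_0^n f'(s)^2\,ds+\varepsilon$ eventually, $\Pb$-a.s.; taking $\varepsilon\downarrow0$ along a sequence and then $\liminf$ (resp.\ passing to a deterministic subsequence $t_n$ realising $\limsup_{t}\frac1{2t}\int_0^t f'^2$, as in Proposition~\ref{liminf_prop}) yields both displayed inequalities. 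The one genuinely non-routine point is the interpolation between integer times: I need that $|\hat N^{f,L}(t)|$ cannot be much larger than $|\hat N^{f,L}(\lfloor t\rfloor)|$ for $t\in[\lfloor t\rfloor,\lfloor t\rfloor+1]$. This is \emph{not} monotone (particles leave the tube), but the number of \emph{births} in unit time is dominated by a process with exponential growth, so a standard estimate — bounding $\sup_{t\in[n,n+1]}|\hat N^{f,L}(t)|$ by $|\hat N^{f,L}(n)|$ times the maximal offspring produced in one unit of time from $|\hat N^{f,L}(n)|$ independent particles, which is $O(e^{Cn})$ with overwhelming probability — closes the gap; I expect this interpolation lemma, rather than the first-moment computation, to be the main technical obstacle, though it is entirely standard.
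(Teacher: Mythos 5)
Your overall strategy (first moment via Many-to-One, then Borel--Cantelli along integer times plus an interpolation step) is a viable and genuinely different route from the paper's, which is purely pathwise: the paper fixes $\alpha>1$, notes that the positive martingale $Z^{f,\alpha L}$ converges $\Pb$-a.s.\ so that $\limsup_t\frac1t\log Z^{f,\alpha L}(t)\leq0$, and then bounds $Z^{f,\alpha L}(t)\geq|\hat N^{f,L}(t)|\,\varepsilon\,e^{(\pi^2/8\alpha^2L^2-r)t+\frac12\int_0^tf'(s)^2ds-o(t)}$ term by term, using $\varepsilon=\cos(\pi/2\alpha)$ to control the cosine and Lemma~\ref{int_lem} to control the Girsanov factor, before letting $\alpha\downarrow1$. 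This is exactly the argument you began in your first paragraph and then abandoned: the fact that $Z^{f,L'}(t)$ is a sum is not an obstacle, because Lemma~\ref{int_lem} bounds the particle-dependent exponent $\int_0^tf'(s)\,dX_u(s)-\int_0^tf'(s)^2\,ds$ uniformly over all $u\in\hat N^{f,L}(t)$, so every summand indexed by $\hat N^{f,L}(t)$ admits the same deterministic lower bound. Had you pushed that through, you would have obtained the result with no Borel--Cantelli and no interpolation lemma at all, since martingale convergence already gives an almost sure statement valid for all large $t$ simultaneously.

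As written, your moment computation contains a genuine error. The inequality $\ind_{\{|\xi_s-f(s)|<L\ \forall s\leq t\}}e^{-\pi^2t/8L^2}e^{\frac12\int_0^tf'(s)^2ds-\cdots}\leq\zeta(t)$ is false: $\zeta(t)$ carries the factor $\cos(\frac{\pi}{2L}(\xi_t-f(t)))$, which on the open tube event has no positive lower bound (it vanishes as the spine approaches the boundary), so $\zeta(t)$ cannot be bounded below by the indicator times a deterministic quantity. Consequently the tube-probability bound you then obtain by ``taking expectations'' does not follow; it also carries the wrong sign on the eigenvalue term ($e^{+\pi^2t/8L^2}$ where the correct decay is $e^{-\pi^2t/8L'^2}$), inconsistent with the final display you combine it into. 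The fix is precisely the fattening device you introduced and then dropped: compare against the single-particle martingale for the wider tube of radius $L'>L$, restricted to the event that $\xi$ stays within $L$ of $f$, where the cosine is at least $\eta=\cos(\pi L/2L')$; this yields $\Pt(|\xi_s-f(s)|<L\ \forall s\leq t)\leq\eta^{-1}e^{-\pi^2t/8L'^2-\frac12\int_0^tf'(s)^2ds+2L\int_0^t|f''(s)|ds+2L|f'(0)|}$, and you must then let $L'\downarrow L$ at the very end, just as the paper sends $\alpha\downarrow1$. With that repaired, your Borel--Cantelli step and your interpolation (most cleanly via $\Pb[\sup_{t\in[n,n+1]}|\hat N^{f,L}(t)|]\leq e^r\,\Pb[|\hat N^{f,L}(n)|]$, using that every particle counted at time $t\in[n,n+1]$ descends from a particle in $\hat N^{f,L}(n)$) are standard and close the argument, at the cost of considerably more machinery than the paper's one-line appeal to martingale convergence.
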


\begin{proof}
Fix $\alpha>1$ and let $\varepsilon=\cos(\pi/2\alpha)$. Since $Z^{f,\alpha L}$ is a positive martingale under $\Pb$, we have $Z^{f,\alpha L}(\infty) < \infty$ $\Pb$-almost surely. This implies that, almost surely,
\[\limsup_{t\to\infty}\frac{1}{t}\log Z^{f,\alpha L}(t) \leq 0.\]
Now, almost surely under $\Pb$,
\[Z^{f,\alpha L}(t) \hs = \sum_{u\in \hat N^{f,\alpha L}(t)} e^{-rt} \zeta^{f,\alpha L}_u(t) \hs \geq \sum_{u\in \hat N^{f,L}(t)} e^{-rt} \zeta^{f,\alpha L}_u(t).\]
By the definition of $\varepsilon$ above, for any $u\in \hat N^{f,L}(t)$ the cosine term in $\zeta^{f,\alpha L}_u(t)$ is at least $\varepsilon$ (since the particle is within $L$ of $f(t)$ at time $t$). Applying Lemma \ref{int_lem} we see that
\[Z^{f,\alpha L}(t) \geq |\hat N^{f,L}| e^{(\frac{\pi^2}{8\alpha^2 L^2}-r)t}\cdot\varepsilon\cdot e^{\frac{1}{2}\int_0^t f'(s)^2 ds - 2L\int_0^t |f''(s)| ds - 2L|f'(0)|}\]
and hence
\begin{eqnarray*}
\frac{1}{t}\log |\hat N^{f,L}(t)| &\leq& \frac{1}{t}\log Z^{f,\alpha L}(t) + r - \frac{\pi^2}{8\alpha^2 L^2} +\frac{1}{t}\log\frac{1}{\varepsilon}\\
									&&   \hspace{8mm} + \frac{1}{2t}\int_0^t f'(s)^2 ds - \frac{2L}{t}\int_0^t |f''(s)|ds - \frac{2L}{t}|f'(0)|.
\end{eqnarray*}
Thus (using that $|\hat N^{f,L}(t)|$ and $Z^{f,\alpha L}(t)$ are c\`adl\`ag functions of $t$) we may easily show that
\[\limsup_{t\to\infty}\frac{1}{t}\log |\hat N^{f,L}(t)| \leq r - \frac{\pi^2}{8\alpha^2 L^2} - \liminf_{t\to\infty}\frac{1}{2t}\int_0^t f'(s)^2 ds.\]
Our first claim follows by letting $\alpha\downarrow 1$.
Now, taking times $s_n\to\infty$ such that
\[\lim_{n\to\infty}\frac{1}{s_n}\int_0^{s_n} f'(s)^2 ds = \limsup_{t\to\infty}\frac{1}{t}\int_0^t f'(s)^2 ds\]
and running the same argument as above along the sequence $s_n$, we get
\begin{eqnarray*}
\liminf_{t\to\infty}\frac{1}{t}\log |\hat N(t)| &\leq& \limsup_{n\to\infty}\frac{1}{s_n}\log |\hat N(s_n)|\\
&\leq& r - \frac{\pi^2}{8L^2} - \liminf_{n\to\infty}\frac{1}{2s_n}\int_0^{s_n} f'(s)^2 ds\\
&=& r - \frac{\pi^2}{8L^2} - \limsup_{t\to\infty}\frac{1}{2t}\int_0^t f'(s)^2 ds. \qedhere
\end{eqnarray*}
\end{proof}

\section{Showing that $Z(\infty)=0$ agrees with extinction}\label{inv_spine}
We note that we have now established our main result except for one key point: we have been working so far on the event $\{Z(\infty)>0\}$, rather than the event of non-extinction of the process, $\{\Upsilon = \infty\}$. We turn now to showing that these two events differ only on a set of zero probability.

The approach to proving this is often analytic, showing that \mbox{$\Pb(Z(\infty)>0)$} and \mbox{$\Pb($non-extinction$)$} satisfy the same differential equation with the same boundary conditions, and then showing that any such solution to the equation is unique. There is sometimes a probabilistic approach to such arguments: one considers the product martingale
\[P(t):=\Pb(Z(\infty)=0 | \Fg_t) = \prod_{u\in N(t)} \Pb_{X_u(t)}(Z_u(\infty)=0).\]
On extinction, the limit of this process is clearly 1, and if we could show that on non-extinction the limit is 0, then since $P$ is a bounded non-negative martingale we would have
\[\Pb(\hbox{extinction}) = \Pb[P(\infty)] = \Pb[P(0)] = \Pb(Z(\infty)=0).\]
In \cite{harris_et_al:fkpp_one_sided_travelling_waves}, for example, we have killing of particles at the origin rather than on the boundary of a tube -- and it is shown that on non-extinction, at least one particle escapes to infinity and its term in the product martingale tends to zero. This is enough to complete the argument (although in \cite{harris_et_al:fkpp_one_sided_travelling_waves} the authors favour the analytic approach). In our case we are hampered by the fact that for a single particle $u$ the value of $\Pb_{X_u(t)}(Z_u(\infty)=0)$ is bounded away from zero, and if the particle is close to the edge of the tube, or even possibly in some places in the interior the tube, then this probability takes values arbitrarily close to 1.

The time-inhomogeneity of our problem means that other standard methods also fail. Our alternative approach is more direct: we show that if at least one particle survives for a long time, then it will have many births in ``good'' areas of the tube, and thus $Z(\infty)>0$ with high probability.

Recall that under $\Pt_x$, we start at time $t=0$ with one particle at position $x$ (rather than at the origin) -- and similarly for $\Qt_x$. We now need some more notation.

\begin{defn}
For $t\in[0,\infty)$ define
\[\begin{array}{lrcl} g_t: & [0,\infty) &\to     &\Rb\\
						   &      s     &\mapsto &f(s+t)-f(t).\end{array}\]
Now for $\alpha\in[0,1)$, define
\[U_\alpha = \{(t,x) : \Pb_{x-f(t)}(Z^{g_t, L}(\infty)>0) \geq \alpha\} \subseteq [0,\infty)\times\Rb.\]
Finally, for any particle $u$ and $t\geq0$, define
\[I_\alpha(u;t) = \int_0^{t\wedge S_u} \ind_{\{X_u(s) \in U_\alpha\}} ds;\]
$I_\alpha(u;t)$ is the time spent by particle $u$ in the set $U_\alpha$ before $t$.
\end{defn}

\begin{figure}[h!]
  \centering
      \includegraphics[width=\textwidth]{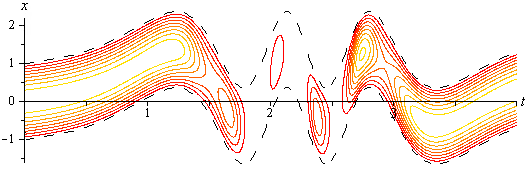}
  \caption{Approximation to a section of $U_\alpha$ for eight different values of $\alpha$ when \mbox{$f(t)=\sin(a \tanh(t+b))+c$} for some constants $a$, $b$ and $c$.}
\end{figure}

Our first lemma in this section establishes that for sufficiently small $\alpha$, $U_\alpha$ -- which we think of as the good part of the tube -- stretches to near the top and bottom edges of the tube for almost $\tilde S/r$ proportion of the time. To do this we use the identity given in Lemma \ref{extinction_lem} combined with the spine decomposition.

\begin{lem}\label{tube_filling}
Fix $\delta\in(0,L)$ and $\beta<1$. If $\tilde S > 0$ then for sufficiently small $\alpha>0$ and large $T$, we have
\[\int_0^t \ind_{\{(s,x)\in U_\alpha \hsl \forall x \in [-L+\delta, L-\delta]\}} ds \geq \beta\frac{\tilde S}{r}t \hs \forall t\geq T.\]
\end{lem}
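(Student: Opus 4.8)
The plan is, for each fixed time $s$, to reduce the event inside the integral to a lower bound on the survival probability $\Pb_y(Z^{g_s,L}(\infty)>0)$ that is uniform over starting points $y$ with $|y|\le L-\delta$, and then to control the density of the set of ``good'' times $s$. First note that $g_s$ satisfies the usual conditions for every $s\ge 0$, with $S(g_s)=S$ and hence $\tilde S(g_s)=\tilde S>0$, so that every result established above for $f$ applies verbatim to $g_s$. The event in the integrand is precisely that $\Pb_y(Z^{g_s,L}(\infty)>0)\ge\alpha$ for all $|y|\le L-\delta$ (equivalently, that the relevant portion of the time-$s$ slice lies in $U_\alpha$), and Lemma \ref{extinction_lem} applied to the $g_s$-process started from $y$ gives $\Pb_y(Z^{g_s,L}(\infty)>0)=\cos(\pi y/2L)\,\Qb^{g_s,L}_y[1/Z^{g_s,L}(\infty)]\ge c_\delta\,\Qb^{g_s,L}_y[1/Z^{g_s,L}(\infty)]$, where $c_\delta:=\cos(\pi(L-\delta)/2L)$, since $Z^{g_s,L}(0)=\cos(\pi y/2L)$ when we start from $y$. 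So it suffices to bound $\Qb^{g_s,L}_y[1/Z^{g_s,L}(\infty)]$ from below.

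For this write $\phi(w):=r-\tfrac{\pi^2}{8L^2}-\tfrac12 f'(w)^2-2L|f''(w)|$, $\Phi(w):=\int_0^w\phi$, and $H(s):=\int_s^\infty e^{-(\Phi(w)-\Phi(s))}\,dw$; the usual conditions give $\liminf_{w\to\infty}\Phi(w)/w=\tilde S>0$, so for each $\varepsilon>0$ there is $C_\varepsilon$ with $\Phi(w)\ge(\tilde S-\varepsilon)w-C_\varepsilon$ for all $w\ge 0$. Applying Lemma \ref{int_lem} to the spine (which is $\Qt^{g_s,L}$-almost surely in $\hat N^{g_s,L}(t)$ for all $t$) and using $\cos\le 1$ yields the pathwise bound $e^{-ru}\zeta^{g_s,L}(u)\le e^{2L|f'(s)|}e^{-(\Phi(s+u)-\Phi(s))}$; in particular this tends to $0$, so the spine decomposition passes to the limit, and together with conditional Jensen it gives $\Qb^{g_s,L}_y[1/Z^{g_s,L}(\infty)]\ge\Qt^{g_s,L}_y\big[1/\Qt^{g_s,L}_y[Z^{g_s,L}(\infty)\mid\Gg_\infty]\big]\ge\big(2r\,e^{2L|f'(s)|}H(s)\big)^{-1}$. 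Hence if $|f'(s)|\le M$ and $H(s)\le K_0$ then $\Pb_y(Z^{g_s,L}(\infty)>0)\ge c_\delta\,(2r\,e^{2LM}K_0)^{-1}$ for all $|y|\le L-\delta$; so, fixing $\alpha:=c_\delta\,(2r\,e^{2LM}K_0)^{-1}$, the integrand of the lemma is at least $\ind_{G_{M,K_0}}(s)$, where $G_{M,K_0}:=\{s:|f'(s)|\le M,\ H(s)\le K_0\}$.

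It then remains to choose $M$ and $K_0$ so that $G_{M,K_0}$ has lower density $>\beta\tilde S/r$. Since $\{|f'|\le M\}$ has lower density at least $1-2S/M^2$ (Chebyshev and the definition of $S$), it is enough that $\{H\le K_0\}$ has lower density tending to at least $\tilde S/r$ as $K_0\to\infty$. For this I would use that $H$ solves the linear ODE $H'=\phi H-1$ (differentiate $H(s)=e^{\Phi(s)}\int_s^\infty e^{-\Phi}$), so that $(\log H)'=\phi-1/H$ and $\int_0^t H(s)^{-1}\,ds=\Phi(t)-\log H(t)+\log H(0)$. Two one-sided bounds on $H$ close the argument: first, $H\ge 1/r$ everywhere (as $\phi\le r$), whence $\int_0^t H^{-1}\le r\,|\{s\le t:H(s)\le K_0\}|+t/K_0$; second, $H(t)\le C'_\varepsilon\,e^{\Phi(t)-(\tilde S-\varepsilon)t}$ (from $\Phi(w)\ge(\tilde S-\varepsilon)w-C_\varepsilon$), whence $\int_0^t H^{-1}\ge(\tilde S-\varepsilon)t-C''_\varepsilon$. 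Combining gives $|\{s\le t:H(s)\le K_0\}|\ge r^{-1}\big((\tilde S-\varepsilon-K_0^{-1})t-C''_\varepsilon\big)$, so the lower density of $\{H\le K_0\}$ is at least $(\tilde S-\varepsilon-K_0^{-1})/r$. Taking $\varepsilon$ and $1/K_0$ small so that this exceeds $\beta'\tilde S/r$ for some $\beta<\beta'<1$, and then $M$ large so that $\beta'\tilde S/r-2S/M^2>\beta\tilde S/r$, we conclude that $G_{M,K_0}$ has lower density $>\beta\tilde S/r$; fixing this $M,K_0$ and the corresponding $\alpha$, the integral over $[0,t]$ is at least $\beta(\tilde S/r)t$ for all $t$ past some $T$.

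The step I expect to be the main obstacle is the density estimate for $\{H\le K_0\}$: one must upgrade the mere Ces\`aro control of $\Phi(w)/w$ into a bound on the \emph{proportion} of times $s$ at which the forward integral $H(s)=\int_s^\infty e^{-(\Phi(w)-\Phi(s))}\,dw$ stays bounded. The ODE for $H$ linearises this, but care is needed to keep all constants uniform in $s$ (which works because the inequality $\Phi(w)\ge(\tilde S-\varepsilon)w-C_\varepsilon$ is uniform) and to check that the $\liminf$ in the definition of $S$ --- rather than a genuine limit --- is enough (it is, since only the lower bound on $\Phi$ is used). A secondary technical point is justifying the passage to the limit in the spine decomposition and the Jensen step even when $Z^{g_s,L}(\infty)$ could a priori be infinite; here $1/Z^{g_s,L}$ is a $\Qt$-supermartingale (as in the proof of Proposition \ref{uiprop}), so the limit exists, and Fatou supplies the one-sided inequality that is all we use.
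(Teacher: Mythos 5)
Your proposal is correct, and its probabilistic half coincides with the paper's: both reduce the event $(s,x)\in U_\alpha$ to a uniform lower bound on $\Pb_y(Z^{g_s,L}(\infty)>0)$ via Lemma \ref{extinction_lem}, the spine decomposition, Lemma \ref{int_lem} and conditional Jensen, and both must additionally restrict to times where $|f'(s)|$ is bounded (your set $\{|f'|\le M\}$ is the paper's set $V$, handled by the same Chebyshev argument against $S<2r$). Where you genuinely diverge is the deterministic density estimate for the remaining ``good'' times. The paper defines $J_t=\inf_{s\ge t}\int_0^s(\phi(u)-q\tilde S)\,du$ and takes $U=\{t: J_t \mbox{ increasing at } t\}$; for $t\in U$ the forward increments of $\Phi$ grow at rate at least $q\tilde S$, which bounds your $H(t)$ by $1/q\tilde S$, and the density of $U$ is controlled by the elementary observation that $J$ increases at rate at most $r$ while $J_t\ge(p-q)\tilde S t$ for $t\ge T(p)$. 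You instead work directly with the sublevel sets of $H(s)=\int_s^\infty e^{-(\Phi(w)-\Phi(s))}\,dw$ --- the exact quantity appearing in the spine-decomposition bound --- and extract their density from the identity $\int_0^t H^{-1}=\Phi(t)-\log H(t)+\log H(0)$ together with $H\ge 1/r$ and $\log H(t)\le\Phi(t)-(\tilde S-\varepsilon)t+C$. The two routes are comparable in length and both elementary; the paper's has the advantage of meshing with the explicit deterministic time $T(p)$ already introduced (and reused in Proposition \ref{uiprop} and Lemma \ref{spine_U}), while yours is self-contained, avoids introducing $J$ and the auxiliary parameters $p,q$, and makes transparent exactly how the Ces\`aro lower bound on $\Phi$ converts into a density bound. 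One small point worth recording if you write this up: the constant $c_\delta=\cos(\pi(L-\delta)/2L)=\sin(\pi\delta/2L)$ you use is the correct one for starting points $y\in[-L+\delta,L-\delta]$ (the paper's displayed $\cos(\pi\delta/2L)$ in its choice of $\alpha$ appears to be a slip), and the passage to the limit in the spine decomposition is justified exactly as you say, since $1/Z$ is a positive $\Qt$-supermartingale and conditional Fatou gives the one-sided inequality needed.
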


\begin{proof}
Fix $q\in(0,\frac{1-\beta}{2})$ and $p\in(\beta+2q,1)$; we show that for $\alpha = \frac{q\tilde S \cos(\pi\delta / 2L)}{2re^{2Lr\sqrt{2/q\tilde S}}}$ and $t\geq T(p)$, we have
\[\int_0^t \ind_{\{(s,x)\in U_\alpha \hsl \forall x \in [-L+\delta, L-\delta]\}} ds \geq (p-2q)\frac{\tilde S}{r}t.\]
Let
\[J_t = \inf_{s\geq t} \int_0^s \left(r - \frac{\pi^2}{8L^2} - \frac{1}{2}f'(u)^2 - 2L|f''(u)| - q \tilde S\right) du,\]
and define two subsets, $U$ and $V$, of $[0,\infty)$ by
\[U = \{t\geq0: J_t \hbox{ is increasing at } t\} \hs \hbox{ and } \hs V = \left\{t\geq 0 : |f'(t)|< r\sqrt{2/q\tilde S}\right\}.\]
If $J$ is increasing at $t$, then clearly for any $s>0$
\begin{multline*}
\hspace{5mm}\int_0^{t+s}( r - \frac{\pi^2}{8L^2} - \frac{1}{2}f'(u)^2 - 2L|f''(u)| - q\tilde S) du\\
> \int_0^t ( r - \frac{\pi^2}{8L^2} - \frac{1}{2}f'(u)^2 - 2L|f''(u)| - q\tilde S) du,\hspace{5mm}
\end{multline*}
and hence
\[\int_t^{t+s} (r - \frac{\pi^2}{8L^2} - \frac{1}{2}f'(u)^2 - 2L|f''(u)|) du > q\tilde S s.\]
Thus if $t\in U\cap V$ then, as in Proposition \ref{uiprop}, we can apply the spine decomposition and Lemma \ref{int_lem} to get
\begin{eqnarray*}
\Qt_x[Z^{g_t,L}(\infty) | \Gg_\infty] &\leq& \int_0^\infty 2r e^{(\frac{\pi^2}{8L^2} - r)s + \int_0^s g_t'(u) d\xi_u - \frac{1}{2}\int_0^s g_t'(u)^2 du} ds\\
&\leq& \int_0^\infty 2r e^{(\frac{\pi^2}{8L^2} - r)s + \frac{1}{2}\int_0^s g_t'(u)^2 du + 2L\int_0^s |g_t''(u)|du + 2L|g_t'(0)|} ds\\
&=& \int_0^\infty 2r e^{-\int_t^{t+s} (r-\frac{\pi^2}{8L^2} - \frac{1}{2}f'(u)^2 - 2L|f''(u)|)du + 2L|f'(t)|} ds\\
&\leq& e^{2Lr\sqrt{2/q\tilde S}}\int_0^\infty 2r e^{-q\tilde S u} du \hs = \hs \frac{2r e^{2Lr\sqrt{2/q\tilde S}}}{q\tilde S}.
\end{eqnarray*}
Using the identity from Lemma \ref{extinction_lem} together with Jensen's inequality gives
\begin{eqnarray*}
\Pb_x(Z^{g_t,L}(\infty)>0) = \Qb_x\left[\frac{Z^{g_t,L}(0)}{Z^{g_t,L}(\infty)}\right] &=& \Qt_x\left[ \Qt_x\left[\left. \frac{1}{Z^{g_t,L}(\infty)} \right| \Gg_\infty \right]\right] \cos(\frac{\pi x}{2L}) \\
&\geq& \Qt_x\left[  \frac{1}{\Qt_x[Z^{g_t,L}(\infty)|\Gg_\infty]} \right] \cos(\frac{\pi x}{2L})\\
&\geq& \frac{q\tilde S}{2r e^{2Lr\sqrt{2/q\tilde S}}}\cos(\frac{\pi x}{2L}).
\end{eqnarray*}
Thus we have shown that if $t\in U\cap V$ then $\Pb_x(Z^{g_t,L}(\infty)>0)$ is large enough for all $x\in[-L+\delta,L-\delta]$, and it now suffices to show that for $t\geq T(p)$,
\[\int_0^t \ind_{U\cap V}(s) ds \geq (p-2q)\frac{\tilde S}{r}t.\]
But if $t\geq T(p)$ then, since $J$ increases at rate at most $r$,
\[(p-q)\tilde S t \leq J_t \leq \int_0^t r\ind_U(s) ds,\]
and (in fact whenever $t\geq T(0)$)
\[2rt \geq \int_0^t f'(s)^2 ds \geq \int_0^t \frac{2r^2}{q\tilde S} \ind_{V^c}(s) ds;\]
hence for any $t\geq T(p)$
\[\int_0^t \ind_{U\cap V}(s) ds \geq \int_0^t \ind_U(s) ds - \int_0^t \ind_{V^c}(s)ds \geq (p-q)\frac{\tilde S}{r}t - q\frac{\tilde S}{r}t = (p-2q)\frac{\tilde S}{r}t\]
as required.
\end{proof}

We now show that if a particle has remained in the tube for a long time, then it is very likely to have spent a long time in $U_\alpha$. The idea is that if $U_\alpha$ stretches to within $\delta$ of the edge of the tube for a proportion of time, then in order to stay out of $U_\alpha$ a particle must spend a long time in a tube of radius $\delta$. We give estimates for the time spent by Brownian motion in such a tube and apply these to our problem via the many-to-one theorem (Theorem \ref{many_to_one}).

\begin{lem}\label{spine_U}
Fix $\beta<1$ and $\gamma>0$. If $\tilde S >0$ then for sufficiently small $\alpha>0$ and large $T$, we have
\[\Pb(\exists u \in\hat N(t) : I_\alpha(u;t)< \beta\frac{\tilde S}{r}t) \leq e^{-\gamma t}.\]
\end{lem}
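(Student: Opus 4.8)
The plan is to bound the left-hand side with the many-to-one theorem, thereby reducing to a statement about the spine (which under $\Pt$ is a Brownian motion), to use Lemma \ref{tube_filling} to see that on the bad event the spine must spend a macroscopic proportion of time within $\delta$ of the edge of the tube, and then to show that this is prohibitively unlikely once $\delta$ is taken small.

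First I would note that the event $\{\exists u\in\hat N(t): I_\alpha(u;t)<\beta\tilde S t/r\}$ is contained in $\{\sum_{u\in N(t)}\ind_{\{u\in\hat N(t),\,I_\alpha(u;t)<\beta\tilde S t/r\}}\geq1\}$, and that each summand is an $\Fg_t$-measurable functional of the ancestral path of $u$. Theorem \ref{many_to_one} then gives
\[\Pb\left(\exists u\in\hat N(t): I_\alpha(u;t)<\tfrac{\beta\tilde S}{r}t\right)\leq e^{rt}\,\Pt\left(\xi_t\in\hat N(t),\ \int_0^t\ind_{\{(s,\xi_s)\in U_\alpha\}}ds<\tfrac{\beta\tilde S}{r}t\right),\]
since $I_\alpha(\xi;t)=\int_0^t\ind_{\{(s,\xi_s)\in U_\alpha\}}ds$ along the spine. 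Writing $\eta_s:=\xi_s-f(s)$, under $\Pt$ this is a Brownian motion translated by the deterministic drift $-f'$, so it is a (time-inhomogeneous) Markov process, and $\{\xi_t\in\hat N(t)\}=\{|\eta_s|<L\ \forall s\leq t\}$.

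Next, fix $\beta'\in(\beta,1)$ (say $\beta'=(1+\beta)/2$; we may assume $\beta\in(0,1)$, the claim being trivial otherwise) and a small $\delta\in(0,L)$ to be chosen below, and apply Lemma \ref{tube_filling} with these parameters to obtain $\alpha>0$ and $T_1$ such that, for all $t\geq T_1$, the set $G$ of times $s$ at which the whole inner band $\{x:|x-f(s)|\leq L-\delta\}$ of the tube lies in $U_\alpha$ satisfies $|G\cap[0,t]|\geq\beta'\tilde S t/r$. On the event in the last display we have $|\eta_s|<L$ for every $s\leq t$, while $s\in G$ together with $|\eta_s|\leq L-\delta$ forces $(s,\xi_s)\in U_\alpha$; hence
\[\int_0^t\ind_{\{(s,\xi_s)\in U_\alpha\}}ds\ \geq\ \int_{G\cap[0,t]}\ind_{\{|\eta_s|\leq L-\delta\}}ds\ \geq\ |G\cap[0,t]|-\int_0^t\ind_{\{L-\delta<|\eta_s|<L\}}ds,\]
so the event is contained in $\{\int_0^t\ind_{\{L-\delta<|\eta_s|<L\}}ds>\rho t\}$ with $\rho:=(\beta'-\beta)\tilde S/r>0$ a fixed positive constant.

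It thus suffices to show $e^{rt}\,\Pt(\int_0^t\ind_{\{L-\delta<|\eta_s|<L\}}ds>\rho t)\leq e^{-\gamma t}$ for large $t$, by taking $\delta$ small. I would chop $[0,t]$ into unit intervals and set $a_k:=\int_k^{k+1}\ind_{\{L-\delta<|\eta_s|<L\}}ds$; an elementary averaging argument shows that the event forces $a_k>\rho/2$ for at least $\lceil\rho t/3\rceil$ of the indices $k\in\{0,\dots,\lceil t\rceil-1\}$ once $t\geq3$. Since $\{L-\delta<|x|<L\}$ has Lebesgue measure $2\delta$, the Gaussian bound $\Pt(\eta_{k+u}\in(a,a+\delta)\mid\Gg_k)\leq\delta/\sqrt{2\pi u}$ -- which does not see the drift $-f'$ -- gives $\Pt[a_k\mid\Gg_k]\leq2\sqrt{2/\pi}\,\delta$ and hence, by Markov's inequality, $\Pt(a_k>\rho/2\mid\Gg_k)\leq q_\delta$ with $q_\delta:=4\sqrt{2/\pi}\,\delta/\rho$, uniformly. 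As each $\{a_k>\rho/2\}$ is $\Gg_{k+1}$-measurable, peeling these events off one at a time and summing over the $\binom{\lceil t\rceil}{\lceil\rho t/3\rceil}\leq2^{t+1}$ possible bad index sets yields $\Pt(\int_0^t\ind_{\{L-\delta<|\eta_s|<L\}}ds>\rho t)\leq2^{t+1}q_\delta^{\lceil\rho t/3\rceil}$, so that $e^{rt}\,\Pt(\cdots)\leq2(2e^r q_\delta^{\rho/3})^t$ once $q_\delta\leq1$. Choosing $\delta$ small enough that $2e^r q_\delta^{\rho/3}\leq e^{-2\gamma}$ (possible as $\rho$ is fixed), and then $T\geq\max(T_1,3,\gamma^{-1}\log2)$, completes the proof.

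I expect the estimate of the last paragraph to be the main obstacle. The difficulty is that ``spending a fixed positive fraction of time near the edge'' is not a genuine large-deviation event for the tube-conditioned spine -- its equilibrium measure $\mu$ already puts positive mass near the edge -- so one cannot simply feed the occupation time into a Chernoff or Feynman--Kac bound. Instead one must use the freedom granted by Lemma \ref{tube_filling} to take $\delta$ arbitrarily small, making the near-edge strip so thin that occupying it for a positive fraction of any unit interval is already an event of probability $O(\delta)$. It is worth stressing that the time-inhomogeneity that obstructs the standard approaches elsewhere is harmless here: the drift $-f'$ never enters the Gaussian transition-density bound, so no control of $f'$ beyond continuity is needed in this step.
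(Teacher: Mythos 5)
Your proof is correct, and it shares the paper's overall skeleton: a many-to-one reduction to the spine, followed by Lemma \ref{tube_filling} to convert ``too little time in $U_\alpha$'' into ``a positive fraction $\rho t$ of time within $\delta$ of the edge of the tube'', followed by an occupation-time estimate that can be made prohibitively small by shrinking $\delta$. Where you genuinely diverge from the paper is in that last estimate. The paper first strips out the drift with the Girsanov factor $e^{\int_0^t f'(s)\,d\xi_s-\frac12\int_0^t f'(s)^2\,ds}$, controlled on $\{\xi_t\in\hat N(t)\}$ via Lemma \ref{int_lem} by $e^{2L\int_0^t|f''(s)|\,ds+2L|f'(0)|}=e^{o(t)}$, folds the two edge strips into one by reflection, and then proves $\Pt(\int_0^t\ind_{\{\xi_s\in(-\delta,\delta)\}}\,ds>k)\leq 3e^{t/2-k/4\delta}$ by an It\^o--Tanaka argument with the smoothed modulus $h_\delta$. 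You instead chop $[0,t]$ into unit blocks, bound the conditional mean edge-occupation of each block by $O(\delta)$ using only the uniform bound $1/\sqrt{2\pi u}$ on the Gaussian transition density --- which, as you rightly stress, is blind to the deterministic drift $-f'$ --- and finish with Markov's inequality, a peeling argument and a $2^{O(t)}$ union bound over bad index sets. Your route is more elementary (no stochastic calculus, no Girsanov, no reflection) and needs no control on $f''$ at this step; the paper's route gives the stronger decay rate of order $1/\delta$ rather than your $\log(1/\delta)$, but both diverge as $\delta\downarrow 0$, which is all that is required since $\delta$ is a free parameter. Your bookkeeping is also sound on the two points where care is needed: the $e^{rt}$ factor from Theorem \ref{many_to_one} is carried explicitly and beaten by taking $\delta$ small (the paper's final display in fact leaves this factor to be absorbed by relabelling $\gamma$), and since $\rho=(\beta'-\beta)\tilde S/r$ depends on neither $\delta$ nor $\alpha$, your order of choices --- $\delta$ from $(\rho,\gamma,r)$, then $\alpha$ and $T_1$ from Lemma \ref{tube_filling} --- is free of circularity.
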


\begin{proof}
First we show that for any $\delta > 0$ and $k>0$,
\[\Pt(\int_0^t \ind_{\{\xi_s \in (-\delta, \delta)\}} ds > k) \leq 3e^{t/2 - k/4\delta}.\]
Recall that under $\Pt$, the spine's motion is simply a Brownian motion. One may check (by approximating with $C^2$ functions and applying It\^o's formula) that, setting
\[h_\delta(x) = \left\{\begin{array}{ll} |x| & \hbox{if } |x|\geq \delta \\ \frac{\delta}{2} + \frac{x^2}{2\delta} & \hbox{if } |x|<\delta \end{array}\right.\]
we have
\[h_\delta(\xi_t) = \frac{\delta}{2} + \int_0^t h'_\delta (\xi_s) d\xi_s + \frac{1}{2\delta}\int_0^t \ind_{\{\xi_s\in(-\delta,\delta)\}} ds.\]
Also,
\[\Pt[e^{-\int_0^t h'_\delta(\xi_s)d\xi_s}] \leq \Pt[e^{-\int_0^t h'_\delta(\xi_s)d\xi_s - \frac{1}{2}\int_0^t h'_\delta(\xi_s)^2 ds}]e^{t/2} \leq e^{t/2}.\]
Thus
\begin{eqnarray*}
&&\Pt(\int_0^t \ind_{\{\xi_s \in (-\delta,\delta)\}} ds > k)\\
&&= \Pt(h_\delta(\xi_t) - \frac{\delta}{2} - \int_0^t h_\delta '(\xi_s) d\xi_s > \frac{k}{2\delta})\\
&&\leq \Pt(|\xi_t| - \int_0^t h_\delta ' (\xi_s) d\xi_s > \frac{k}{2\delta})\\
&&\leq \Pt(\xi_t > \frac{k}{4\delta}) + \Pt(-\xi_t > \frac{k}{4\delta}) + \Pt(- \int_0^t h_\delta ' (\xi_s) d\xi_s > \frac{k}{4\delta})\\
&&\leq \Pt[e^{\xi_t}]e^{-k/4\delta} + \Pt[e^{-\xi_t}]e^{-k/4\delta} + \Pt[e^{- \int_0^t h_\delta ' (\xi_s) d\xi_s}]e^{-k/4\delta}\\
&&\leq 3e^{t/2 - k/4\delta},
\end{eqnarray*}
establishing our first claim. 
Now, for any $\delta>0$, by Lemma \ref{tube_filling} we may choose $\alpha>0$ and $T$ such that
\[\int_0^t \ind_{\{(s,x)\in U_\alpha \hsl \forall x\in[-L+\delta,L-\delta]\}} ds \geq (\frac{1+\beta}{2})\frac{\tilde S}{r}t \hs \forall t\geq T.\]
Then if the spine particle is to have spent less than $\beta\frac{\tilde S}{r}t$ time in $U_\alpha$ (yet remained within the tube of width $L$) then it must have spent at least $(\frac{1-\beta}{2})\frac{\tilde S}{r}t$ within $\delta$ of the edge of the tube (provided that $t$ is large enough). That is, for $t\geq T$,
\begin{eqnarray*}
&&\Pt(\xi_t \in \hat N(t), I_\alpha(\xi_t;t)<\beta\frac{\tilde S}{r}t)\\
&&\leq \Pt(\int_0^t \ind_{\{\xi_s \in (f(s)-L, f(s)-L+\delta)\cup(f(s)+L-\delta, f(s)+L)\}}ds > (\frac{1-\beta}{2})\frac{\tilde S}{r}t).
\end{eqnarray*}
In fact, using the fact that if $\xi_t \in \hat N(t)$ then we may apply the Girsanov part of our usual measure change and our usual estimate on it,
\begin{eqnarray*}
&&\Pt(\xi_t \in \hat N(t), I_\alpha(\xi_t;t)<\beta\frac{\tilde S}{r}t)\\
&&\leq \Pt\left[\frac{\ind_{\{\xi_t \in \hat N(t)\}}}{e^{\int_0^t f'(s)d\xi_s - \frac{1}{2}\int_0^t f'(s)^2 ds}}\ind_{\{\int_0^t \ind_{\{\xi_s \in (-L, -L+\delta)\cup(L-\delta,L)\}}ds > (\frac{1-\beta}{2})\frac{\tilde S}{r}t\}}\right]\\
&&\leq e^{2L\int_0^t |f''(s)| ds + 2L|f'(0)|}\Pt(\int_0^t \ind_{\{\xi_s \in (-L, -L+\delta)\cup(L-\delta,L)\}}ds > (\frac{1-\beta}{2})\frac{\tilde S}{r}t).
\end{eqnarray*}
By the reflection and Markov properties of Brownian motion, we have
\begin{multline*}
\hspace{10mm}\Pt(\int_0^t \ind_{\{\xi_s \in (-L, -L+\delta)\cup(L-\delta,L)\}}ds > (\frac{1-\beta}{2})\frac{\tilde S}{r}t)\\
\leq 2\Pt(\int_0^t \ind_{\{\xi_s \in (-\delta,\delta)\}} ds > (\frac{1-\beta}{4})\frac{\tilde S}{r}t).\hspace{10mm}
\end{multline*}
Putting all of this together and using the estimate given in the first part of the proof, we get
\[\Pt(\xi_t \in \hat N(t), I_\alpha(\xi_t;t)<\beta\frac{\tilde S}{r}t) \leq 2e^{2L\int_0^t |f''(s)| ds + 2L|f'(0)|}.3e^{\frac{t}{2}-(\frac{1-\beta}{4})\frac{\tilde S}{r}t/4\delta}.\]
Finally, taking $\delta = \frac{(1-\beta)\tilde S}{16r(r+\gamma+1)}$ and using the fact that for $t\geq T(0)$ we have $e^{2L\int_0^t |f''(s)| ds + 2L|f'(0)|}\leq e^{rt}$, we get
\[\Pt(\xi_t \in \hat N(t), I_\alpha(\xi_t;t)<\beta\frac{\tilde S}{r}t) \leq e^{-\gamma t} \hs \forall t\geq T\vee T(0)\vee 2\log 6.\qedhere\]
\end{proof}

\begin{prop}\label{invspineprop}
Recall that $\Upsilon$ is the extinction time for the process. If $\tilde S > 0$ then
\[\Pb(\Upsilon = \infty) = \Pb(Z(\infty)>0).\]
\end{prop}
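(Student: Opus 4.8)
The plan is to prove the nontrivial inclusion $\Pb(\Upsilon=\infty)\leq\Pb(Z(\infty)>0)$ (the reverse is immediate, since $Z(t)$ is supported on $\hat N(t)$ and so $Z(\infty)>0$ forces $\hat N(t)\neq\emptyset$ for all $t$). Equivalently, working with complements, I would show that on the event of non-extinction we have $Z(\infty)>0$ almost surely. The strategy is to condition on a late time $t$: if the process has not gone extinct by time $t$, then there is a particle $u\in\hat N(t)$, and by Lemma \ref{spine_U} (transferred from the spine to a typical particle via the many-to-one theorem, Theorem \ref{many_to_one}) this particle has, with probability at least $1-e^{-\gamma t}$, spent at least $\beta\tilde S t/r$ units of time inside the good region $U_\alpha$ before time $t$. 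By definition of $U_\alpha$, at each such moment $s$ the particle sits at a point from which the shifted process $Z^{g_s,L}$ survives with probability at least $\alpha$, and these survival attempts can be made conditionally independent by looking at the subtrees born along $u$'s trajectory during its time in $U_\alpha$.

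Concretely, here are the steps I would carry out. First, fix $\beta<1$ and a large $\gamma$ (to be chosen so that $\sum_t e^{-\gamma t}<\infty$ along integer times, enabling Borel--Cantelli), and invoke Lemma \ref{spine_U} together with the many-to-one theorem to get that
\[\Pb\bigl(\exists u\in\hat N(t): I_\alpha(u;t)<\tfrac{\beta\tilde S}{r}t\bigr)\leq e^{-\gamma t}\]
for all large $t$ and suitably small $\alpha>0$. Second, on the complementary event, \emph{every} particle in $\hat N(t)$ has spent at least $\tfrac{\beta\tilde S}{r}t$ time in $U_\alpha$; pick one such particle $u$ and examine the fission times along its ancestral line that fall during the periods it is in $U_\alpha$. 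By the definition of $\Pb$ (branching at rate $r$), the expected number of such fissions is at least $r\cdot\tfrac{\beta\tilde S}{r}t=\beta\tilde S t$, and with overwhelming probability there are at least (say) $\tfrac{1}{2}\beta\tilde S t$ of them. At each such fission, the sibling subtree is an independent copy of a BBM started at a point of $U_\alpha$ at the appropriate shifted time, hence has probability at least $\alpha$ of contributing a strictly positive amount to $Z(\infty)$ (using that $Z^{g_s,L}(\infty)>0$ for the shifted process corresponds exactly to $Z(\infty)>0$ restricted to that subtree, by the cocycle/shift property of $Z$). Third, since these subtrees are independent, the probability that \emph{none} of them has $Z(\infty)>0$ is at most $(1-\alpha)^{\frac{1}{2}\beta\tilde S t}$, which tends to $0$ as $t\to\infty$. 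Fourth, assembling: for large integer $t$,
\[\Pb(\Upsilon=\infty,\ Z(\infty)=0)\leq\Pb\bigl(\exists u\in\hat N(t):I_\alpha(u;t)<\tfrac{\beta\tilde S}{r}t\bigr)+(\text{binomial tail})+(1-\alpha)^{\frac{1}{2}\beta\tilde S t}\to 0,\]
so $\Pb(\Upsilon=\infty,\ Z(\infty)=0)=0$, which is the desired equality.

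The main obstacle I anticipate is making the independence argument fully rigorous: one must carefully extract, on the event that $u$ has spent time $\tfrac{\beta\tilde S}{r}t$ in $U_\alpha$, a \emph{deterministic} number of conditionally-independent sibling subtrees, each rooted at a space--time point lying in $U_\alpha$, and verify that ``$Z(\infty)>0$ for the shifted subtree-process'' really is the correct event — i.e. that $Z^{g_s,L}$ for the subtree born at time $s$ at position $X$ genuinely has the same law (under $\Pb_{X-f(s)}$) as claimed in the definition of $U_\alpha$, so that its survival probability is at least $\alpha$ by construction. This requires the branching/Markov (strong Markov at fission times) structure of the tree and a small amount of bookkeeping to handle the fact that the number and times of fissions in $U_\alpha$ are themselves random; one clean way is to further condition on the ancestral trajectory of $u$ up to time $t$ (which is $\Gg$-type information) and only then use the conditional independence of the sibling subtrees given that trajectory. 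A secondary, more routine, point is ensuring the Borel--Cantelli step only needs the bound along a sequence of times (e.g. integers) and then upgrading to all times using that $\hat N$ is monotone enough in the relevant sense — or simply noting that $\{\Upsilon=\infty\}=\bigcap_{n}\{\hat N(n)\neq\emptyset\}$ and it suffices to control each $n$.
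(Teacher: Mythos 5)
Your proposal is correct and follows essentially the same route as the paper: invoke Lemma \ref{spine_U} at a single large time, note that a particle which has spent time of order $t$ in $U_\alpha$ has many fission events there (Poisson at rate $r$), and use the independence of the sibling subtrees launched from points of $U_\alpha$, each surviving with probability at least $\alpha$, to force $Z(\infty)>0$. The only cosmetic differences are that the paper fixes a deterministic number $m$ of births (with $(1-\alpha)^m<\varepsilon/3$) rather than a number growing like $\tfrac{1}{2}\beta\tilde S t$, and works with a single fixed $T$ and an $\varepsilon/3$ decomposition instead of letting $t\to\infty$; no Borel--Cantelli step is needed since the target event does not depend on $t$.
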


\begin{proof}
We note that $\{Z(\infty)>0\} \subseteq \{\Upsilon = \infty\}$, so it suffices to show that for any $\varepsilon >0$,
\[\Pb(\Upsilon = \infty, \hs Z(\infty)=0) < \varepsilon.\]
To this end, fix $\varepsilon > 0$ and choose $\alpha$ small enough and $T_0$ large enough that
\[\Pb(\exists u \in\hat N(t) : I_\alpha(u;t)< \frac{\tilde S}{2r}t) < \varepsilon/3 \hs \forall t\geq T_0\]
(this is possible by Lemma \ref{spine_U}). Choose an integer $m$ large enough that $(1-\alpha)^m < \varepsilon/3$. Finally, choose $T\geq T_0$ large enough that
\[\sum_{j=0}^{m-1} \frac{e^{-\tilde S T / 2}(\tilde S T / 2)^j}{j!} < \varepsilon/3.\]
Then
\begin{eqnarray*}
\Pb(\Upsilon = \infty, \hs Z(\infty)=0) &\leq& \Pb(\exists u \in \hat N(T), \hs Z(\infty)=0)\\
&<& \Pb(\exists u \in \hat N(T), \hsl I_\alpha(u;T) \geq \frac{\tilde S}{2r}T, \hsl Z(\infty)=0) + \varepsilon/3.
\end{eqnarray*}
Now, if a particle $u$ has spent at least $\frac{\tilde S}{2r}T$ time in $U_\alpha$ then (by the choice of $T$, since the births along $u$ form a Poisson process of rate $r$) it has probability at least $(1-\varepsilon/3)$ of having at least $m$ births whilst in $U_\alpha$. Each of these particles born within $U_\alpha$ launches an independent population from a point $(t,x)\in U_\alpha$, so that
\[Z(\infty)\geq \sum_{v<u} e^{-r S_v} Z_v(\infty) \ind_{\{(S_v, X_u(S_v))\in U_\alpha\}}\]
where each $Z_v$ is a non-negative martingale on the interval $[S_v,\infty)$ with law equal to that of $Z^{g_t}$ started from $x$, and hence satisfying \mbox{$\Pb(Z_v(\infty)>0) \geq \alpha$}. Thus
\begin{eqnarray*}
&&\Pb(\Upsilon = \infty, \hs Z(\infty)=0)\\
&&\leq \Pb(\exists u \in \hat N(T), \hs I_\alpha(u;T) \geq \frac{\tilde S}{2r}T, \hs Z(\infty)=0) + \varepsilon/3\\
&&\leq \Pb\left(\exists u \in \hat N(T), \hs \left\{\begin{array}{c} u \hbox{ has had at least}\\ m \hbox{ births within } U_\alpha\end{array}\right\}, \hs Z(\infty)=0\right) + 2\varepsilon/3\\
&&\leq (1-\alpha)^m + 2\varepsilon/3 \hs < \hs \varepsilon
\end{eqnarray*}
which completes the proof.
\end{proof}

We draw our results together as follows.

\begin{mainprf}
All that remains is to combine Proposition \ref{uiprop} with Propositions \ref{liminf_prop} and \ref{limsup_prop} to gain the desired growth bounds; Proposition \ref{invspineprop} guarantees that we are working on the correct set.\qed
\end{mainprf}

\section{Extending the class of functions}\label{extensions}
As we mentioned earlier, the usual conditions on the function $f$ (specifically the smoothness requirements) in Theorem \ref{mainthm} may be weakened by approximating uniformly and checking that the relevant quantities converge as desired. To see this, suppose that we have a function $f$ which does not satisfy the usual conditions, but such that we have a sequence of functions $f_n:[0,\infty)\to\Rb$, each satisfying the usual conditions, converging uniformly to $f$. Let
\[\bar{S}:= \limsup_{n\to\infty}\limsup_{t\to\infty}\frac{1}{t}\int_0^t f_n'(s)^2 ds\]
and
\[\underline{S}:= \liminf_{n\to\infty}\liminf_{t\to\infty}\frac{1}{t}\int_0^t f_n'(s)^2 ds.\]

\begin{cor}\label{maincor}
If $r < \frac{\bar S}{2} + \frac{\pi^2}{8L^2}$, then $\Upsilon<\infty$ almost surely.
On the other hand, if $r > \frac{\bar S}{2} + \frac{\pi^2}{8L^2}$, then $\Pb(\Upsilon=\infty)>0$ and almost surely on non-extinction we have
\[\limsup_{t\to\infty} \frac{1}{t}\log|\hat N^{f,L}(t)| = r - \frac{\pi^2}{8L^2} - \frac{1}{2}\hspace{0.5mm}\underline S\]
and
\[\liminf_{t\to\infty} \frac{1}{t}\log|\hat N^{f,L}(t)| = r - \frac{\pi^2}{8L^2} - \frac{1}{2}\hspace{0.5mm}\bar S.\]
\end{cor}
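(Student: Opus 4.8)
The plan is to sandwich $\hat N^{f,L}$ between $\hat N^{f_n,L-\varepsilon}$ and $\hat N^{f_n,L+\varepsilon}$ for large $n$, apply Theorem \ref{mainthm} together with Propositions \ref{liminf_prop}, \ref{limsup_prop} and \ref{invspineprop} to the smooth functions $f_n$, and then let $n\to\infty$ and $\varepsilon\downarrow0$. The basic observation is that, since $f_n\to f$ uniformly, for every $\varepsilon\in(0,L)$ and every $n$ with $\|f_n-f\|_\infty<\varepsilon$ we have, for all $t\geq0$,
\[\hat N^{f_n,L-\varepsilon}(t)\ \subseteq\ \hat N^{f,L}(t)\ \subseteq\ \hat N^{f_n,L+\varepsilon}(t),\]
because $|X_u(s)-f_n(s)|<L-\varepsilon$ forces $|X_u(s)-f(s)|<L$, and $|X_u(s)-f(s)|<L$ forces $|X_u(s)-f_n(s)|<L+\varepsilon$. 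Hence $\Upsilon^{f_n,L-\varepsilon}\leq\Upsilon^{f,L}\leq\Upsilon^{f_n,L+\varepsilon}$ and $|\hat N^{f_n,L-\varepsilon}(t)|\leq|\hat N^{f,L}(t)|\leq|\hat N^{f_n,L+\varepsilon}(t)|$ for all $t$.

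For the dichotomy, suppose first $r<\bar S/2+\pi^2/8L^2$. Since $\bar S=\limsup_n S(f_n)$, I may fix $\eta,\varepsilon>0$ small and then $n$ large with $\|f_n-f\|_\infty<\varepsilon$ and $S(f_n)>\bar S-\eta$, so that $r<S(f_n)/2+\pi^2/8(L+\varepsilon)^2$; the first part of Theorem \ref{mainthm}, applied to $f_n$ with radius $L+\varepsilon$, gives $\Upsilon^{f_n,L+\varepsilon}<\infty$ a.s., whence $\Upsilon^{f,L}<\infty$ a.s. If instead $r>\bar S/2+\pi^2/8L^2$, then for $\eta,\varepsilon$ small and all large $n$ we have $S(f_n)<\bar S+\eta$ and $r>S(f_n)/2+\pi^2/8(L-\varepsilon)^2$, so the second part of Theorem \ref{mainthm} applied to $(f_n,L-\varepsilon)$ gives $\Pb(\Upsilon^{f_n,L-\varepsilon}=\infty)>0$, which by the left inclusion forces $\Pb(\Upsilon^{f,L}=\infty)>0$.

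The upper bounds on the growth rate are then immediate and hold unconditionally $\Pb$-a.s.: Proposition \ref{limsup_prop} applied to $(f_n,L+\varepsilon)$ together with the right inclusion gives
\[\limsup_{t\to\infty}\tfrac1t\log|\hat N^{f,L}(t)|\ \leq\ r-\tfrac{\pi^2}{8(L+\varepsilon)^2}-\liminf_{t\to\infty}\tfrac1{2t}\!\int_0^t f_n'(s)^2\,ds,\]
and the analogue with $\liminf_t$ on the left and $\limsup_t\tfrac1{2t}\int_0^tf_n'^2$ on the right. Using that $\liminf_t\tfrac1t\int_0^t f_n'^2>\underline S-\eta$ for all large $n$, and that $\limsup_t\tfrac1t\int_0^t f_n'^2=S(f_n)>\bar S-\eta$ for infinitely many $n$, and letting $\varepsilon,\eta\downarrow0$, yields $\limsup_t\tfrac1t\log|\hat N^{f,L}(t)|\leq r-\pi^2/8L^2-\underline S/2$ and $\liminf_t\tfrac1t\log|\hat N^{f,L}(t)|\leq r-\pi^2/8L^2-\bar S/2$. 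For the matching lower bounds, Propositions \ref{liminf_prop} and \ref{invspineprop} applied to $(f_n,L-\varepsilon)$ (whose associated $\tilde S$ is positive) give, on $\{\Upsilon^{f_n,L-\varepsilon}=\infty\}$ and hence via the left inclusion,
\[\limsup_{t\to\infty}\tfrac1t\log|\hat N^{f,L}(t)|\ \geq\ r-\tfrac{\pi^2}{8(L-\varepsilon)^2}-\liminf_{t\to\infty}\tfrac1{2t}\!\int_0^t f_n'^2,\]
and the analogous $\liminf_t$/$S(f_n)$ bound.

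The main obstacle is that Proposition \ref{liminf_prop} only yields information on $\{\Upsilon^{f_n,L-\varepsilon}=\infty\}$, which is a priori strictly smaller than $\{\Upsilon^{f,L}=\infty\}$, so to conclude I must be able to let $\varepsilon\downarrow0$ while staying on a set of full probability inside $\{\Upsilon^{f,L}=\infty\}$. Precisely, I would fix a subsequence $n_k\to\infty$ (so automatically $\|f_{n_k}-f\|_\infty\to0$) along which $\liminf_t\tfrac1t\int_0^tf_{n_k}'^2\to\underline S$, and prove that
\[\Pb\Bigl(\{\Upsilon^{f,L}=\infty\}\setminus\bigcap_{K\geq1}\bigcup_{k\geq K}\{\Upsilon^{f_{n_k},\,L-1/k}=\infty\}\Bigr)=0,\]
i.e. that a BBM surviving forever in the open $L$-tube about $f$ almost surely also survives forever in slightly narrower tubes about $f_{n_k}$ for infinitely many $k$ (morally, an almost-sure left-continuity in the tube radius of the survival event). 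This is the one step that cannot be handled by a soft inclusion, since a surviving line of descent could in principle hug the boundary of the $L$-tube; I would prove it by an argument parallel to the proof of Proposition \ref{invspineprop}, using the Brownian-tube estimates behind Lemmas \ref{tube_filling} and \ref{spine_U} (now applied with $(f_{n_k},L-1/k)$ in place of $(f,L)$) to show that on $\{\Upsilon^{f,L}=\infty\}$ the surviving particles cannot forever remain in a thin annulus at the edge of the tube, so that almost surely infinitely many fissions occur at points well inside a narrower tube, each launching an independent subtree that survives in that narrower tube with probability bounded away from $0$, followed by a Borel--Cantelli argument along those fissions. Granting this, for a.e. $\omega\in\{\Upsilon^{f,L}=\infty\}$ the event $\{\Upsilon^{f_{n_k},L-1/k}=\infty\}$ occurs for infinitely many $k$, and on it the lower bounds above hold with $\varepsilon=1/k$, $n=n_k$; since $\pi^2/8(L-1/k)^2\to\pi^2/8L^2$, $\liminf_t\tfrac1t\int_0^tf_{n_k}'^2\to\underline S$, and $\limsup_k S(f_{n_k})\leq\bar S$, letting $k\to\infty$ gives $\limsup_t\tfrac1t\log|\hat N^{f,L}(t)|\geq r-\pi^2/8L^2-\underline S/2$ and $\liminf_t\tfrac1t\log|\hat N^{f,L}(t)|\geq r-\pi^2/8L^2-\bar S/2$ a.s. on $\{\Upsilon^{f,L}=\infty\}$. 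Together with the upper bounds, and with Proposition \ref{invspineprop} (applied to the $f_n$) identifying non-extinction with positivity of the relevant martingale limits, this gives the two equalities of Corollary \ref{maincor}.
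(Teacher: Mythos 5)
Your core argument --- sandwiching $\hat N^{f,L}$ between $\hat N^{f_n,L-\varepsilon}$ and $\hat N^{f_n,L+\varepsilon}$ for large $n$ and applying Theorem \ref{mainthm} to the smooth pairs --- is exactly the paper's proof, which takes $\underline L_n=L-\|f-f_n\|_\infty$ and $\bar L_n=L+\|f-f_n\|_\infty$ and says nothing more. Your treatment of the extinction dichotomy and of the two upper bounds (which hold unconditionally by Proposition \ref{limsup_prop}) is complete and correct.

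The point you flag --- that Proposition \ref{liminf_prop} only delivers the lower bounds on $\{\Upsilon^{f_n,L-\varepsilon}=\infty\}$, a priori a strictly smaller event than $\{\Upsilon^{f,L}=\infty\}$, so that one must show a process surviving in the $L$-tube about $f$ almost surely also survives in narrower tubes about $f_{n_k}$ for infinitely many $k$ --- is a genuine subtlety which the paper's one-line proof does not address, and your reduction of the corollary to that single claim is correct. However, your resolution of it is only a sketch, and as stated it cannot simply invoke Lemmas \ref{tube_filling} and \ref{spine_U} ``with $(f_{n_k},L-1/k)$ in place of $(f,L)$'': those lemmas control particles via the Girsanov factor $\int_0^t f'(s)\,dX_u(s)$ and the many-to-one lemma, and therefore need the \emph{ambient} function to satisfy the usual conditions, whereas your surviving particles live in the tube about the non-smooth $f$. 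You would need to embed $\hat N^{f,L}$ into the smooth tube $\hat N^{f_n,\bar L_n}$, define $U_\alpha$ through survival of the martingale for the narrower smooth tube, and rerun the annulus estimate for the gap of width $2\|f-f_n\|_\infty+1/k+\delta$ between the two tubes; the Borel--Cantelli step should then follow the template of Proposition \ref{invspineprop} (condition on the configuration at a fixed large time $T$ and bound by $(1-\alpha)^m$) rather than run along a ``surviving line of descent'', which is not a measurable object you can readily work with. A softer alternative for the same gap: since $\{\Upsilon^{f_n,\underline L_n}=\infty\}\subseteq\{\Upsilon^{f,L}=\infty\}\subseteq\{\Upsilon^{f_n,\bar L_n}=\infty\}$, it would suffice to prove $\Pb(\Upsilon^{f_n,\bar L_n}=\infty)-\Pb(\Upsilon^{f_n,\underline L_n}=\infty)\to0$ and apply Fatou to conclude $\Pb(\{\Upsilon^{f,L}=\infty\}\setminus\limsup_n\{\Upsilon^{f_n,\underline L_n}=\infty\})=0$; but that continuity in the tube radius also requires proof (e.g.\ via Lemma \ref{extinction_lem}). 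Either way, this one step must be written out before your argument is complete.
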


\begin{proof}
This follows easily from Theorem \ref{mainthm} by letting
\[\bar L_n = L + ||f-f_n||_\infty \hs \hbox{ and } \hs \underline L_n = L - ||f-f_n||_\infty\]
and noting that for each $n\geq1$ and $t\geq0$,
\[(f(t) - L, f(t) + L) \subseteq (f_n(t) - \bar L_n, f_n(t) + \bar L_n)\]
and
\[(f(t) - L, f(t) + L) \supseteq (f_n(t) - \underline L_n, f_n(t) + \underline L_n).\qedhere\]
\end{proof}

\noindent
Even with this extension to our theorem, however, there are some functions that still escape our net: for example, $f(t)=\sin t$ is a particularly nice function that one might wish our theorem to cover. In fact, the following example demonstrates that the usual growth rate cannot hold in all cases: 

\begin{ex}\label{badex1}
Let
\[f_\delta(t):=\delta\sin(t/\delta);\]
then as $\delta\to0$, $f_\delta$ converges uniformly to the zero function, $f(t)\equiv 0$.
By Theorem \ref{mainthm} we know that on survival,
\[\lim_{t\to\infty} \frac{1}{t}\log|\hat N^{f,L}(t)| = r - \frac{\pi^2}{8L^2}.\]
However, if the result of Theorem \ref{mainthm} held for each $f_\delta$ then by the same argument as in Corollary \ref{maincor} we would have (on survival)
\[\lim_{t\to\infty} \frac{1}{t}\log|\hat N^{f,L}(t)| = r - \frac{\pi^2}{8L^2} - \frac{1}{4}.\]
Of course, $f_\delta$ does not satisfy usual condition (3) and hence this contradiction does not appear -- it simply serves to highlight the fact that our result cannot hold without some condition on the second derivative.
\end{ex}

\begin{ex}\label{badex2}
Another interesting example is given by letting
\[g_\delta(t):=\sin(t/\delta).\]
Again $g_\delta$ does not satisfy usual condition (3) and we cannot apply Theorem \ref{mainthm}. However, as $\delta\to0$, the frequency of the oscillations increases while the amplitude stays constant, and we expect that the number of particles staying within $L$ of $g_\delta$ should be approximately equal to the number staying within $L-1$ of the constant zero function: that is, we expect for small $\delta$
\[\lim_{t\to\infty} \frac{1}{t}\log|\hat N^{g_\delta,L}(t)| \approx r - \frac{\pi^2}{8(L-1)^2}.\]
\begin{figure}[h!]
  \centering
      \includegraphics[width=9.657cm]{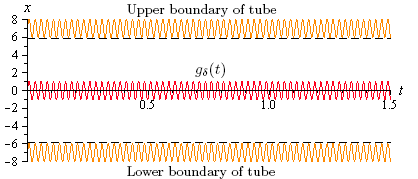}
  \caption{$g_\delta(t)=\sin(t/\delta)$ for small $\delta>0$, $L=7$}
\end{figure}
\end{ex}
Motivated by examples \ref{badex1} and \ref{badex2}, we hope to consider extensions of our result in the future. However it is not clear whether almost sure values for the limsup and liminf even exist in all cases.

\bibliographystyle{plain}
\bibliography{my_bibliography}

\end{document}